\documentclass[11pt,english]{amsart}
\usepackage{fixltx2e}
\usepackage{amsmath,amstext,amsbsy,amsthm,flafter,amssymb}
\usepackage{leftidx}
\usepackage{amsrefs}
\usepackage{tikz}
\usetikzlibrary{shapes,arrows,calc}
\usepackage{pgfplots}
\usepackage{amsaddr}
\usepackage{hyperref}

\newcommand{\rrangle}{\rangle\!\rangle}
\newcommand{\nequiv}{\not\equiv}

\theoremstyle{plain}
\newtheorem{lemma}{Lemma}[section]
\newtheorem{proposition}[lemma]{Proposition}
\newtheorem{theorem}[lemma]{Theorem}

\newcommand{\C}{\ensuremath{\mathbb{C}}}
\newcommand{\Z}{\ensuremath{\mathbb{Z}}}
\newcommand{\N}{\ensuremath{\mathbb{N}}}

\newcommand{\A}{\ensuremath{\mathcal{A}}}

\newcommand{\Hi}{\ensuremath{\mathcal{H}}}

\newcommand{\U}{\ensuremath{\mathcal{U}}}
\newcommand{\Kq}{\ensuremath{\mathcal{K}_q}}

\newcommand{\sut}{\ensuremath{\mathfrak{su}(2)}}
\newcommand{\jun}{\ensuremath{|j,\mu,n\rrangle}}
\newcommand{\half}{\ensuremath{\frac{1}{2}}}

\newcommand{\zpa}{\ensuremath{\upsilon}}
\newcommand{\cop}{\Delta}
\newcommand{\ts}{\otimes}
\newcommand{\id}{\hbox{id}}

\newcommand{\can}{\mathtt{can}}

\let\le\left
\let\r\right

\begin{document}
\title[Quantum lens spaces]{The geometry of quantum lens spaces: real spectral triples and bundle structure}
\author[A.~Sitarz]{Andrzej Sitarz\textsuperscript{1}}%
\address{Jagiellonian University, Institute of Physics, \\
ul. prof. Stanis\l{}awa \L{}ojasiewicza 11, 30-348 Krak\'ow, Poland,}%
\address{Institute of Mathematics of the Polish Academy of Sciences, \\ \'Sniadeckich 8, Warszawa, 00-950 Poland.}%
\email{andrzej.sitarz@uj.edu.pl}%
\thanks{\textsuperscript{1} Partially supported by NCN grant 2011/01/B/ST1/06474.}%
\author[J.J.~Venselaar]{Jan Jitse Venselaar}%
\address{Mathematisches Institut, Georg-August Universit\"at G\"ottingen, Bunsenstra\ss e 3-5, D-37073 G\"ottingen, Deutschland}
\email{jvensel@uni-math.gwdg.de}

\begin{abstract}We study almost real spectral triples on quantum lens spaces, as orbit spaces of free actions of cyclic groups on the spectral geometry on the quantum group $SU_q(2)$. These spectral triples are given by weakening some of the conditions of a real spectral triple. We classify the irreducible almost real spectral triples on quantum lens spaces and we study unitary equivalences of such quantum lens spaces. Applying a useful characterization of principal $U(1)$-fibrations in noncommutative geometry, we show that all such quantum lens spaces are principal $U(1)$-fibrations over quantum teardrops.\end{abstract}

\maketitle
Lens spaces, orbit spaces of free actions of cyclic groups on odd-dimensional spheres, were first introduced in 1884 by Walther Dyck~\cite{british1885report}. Lens spaces are interesting because they are some of the simplest manifolds exhibiting the difference between homotopy type and homeomorphism type. Quantum lens spaces were introduced in~\cite{MR2015735}. As $C^*$-algebras, they are isomorphic to graph-$C^*$-algebras of certain finite graphs.

In this article we study almost real spectral triples on \emph{quantum lens spaces}, as orbit spaces of free actions of cyclic groups on the spectral geometry on the quantum group $SU_q(2)$ of Woronowicz~\cite{MR890482}, as constructed in~\cite{dbrowski_dirac_2005}. These spectral triples are given by weakening some of the conditions of a real spectral triple, just like in~\cite{dbrowski_dirac_2005}. We classify the irreducible spectral geometries on such quantum lens spaces and we study unitary equivalences of such quantum lens spaces. We also derive a way of computing the Dirac spectrum of all these lens spaces, generalizing a result of~\cite{bar_dirac_1992}.

Finally, we study the structure of these quantum lens spaces as fiber bundles over so-called  quantum teardrops, complementing  the work of~\cite{MR2989456}. We show that in the noncommutative setting, all such quantum lens spaces are principal $U(1)$-bundles over a quantum teardrop.

\section{The equivariant spectral triple on $SU_{q}(2)$}\label{sec:spectral triple suq2}

We recall the construction of the equivariant real spectral triple on $SU_q(2)$ from~\cite{dbrowski_dirac_2005}. This is not a real spectral triple in the sense of~\cite{connes_noncommutative_1995}, since the opposite algebra only commutes with the algebra up to compact operators. This was done in order to cope with certain ``no go-theorems'', which showed that it was impossible for a $U_q(\mathfrak{su}(2))$-equivariant spectral to satisfy all conditions of a real spectral triple~\cite[Remark 6.6]{dbrowski_dirac_2005}.

Let $q$ denote a real number, $0\leq q<1$. Let $\A(SU_q(2))$ be the $*$-algebra generated by the two elements  $a$, $b$, satisfying the following relations:
\begin{subequations}\label{eqn:commutation relations suq2}
\begin{align}
 ba &= q ab\\
 b^*a &= q ab^*\\
 b b^* &= b^* b\\
 a^*a + q^2 b^* b &=1\\
 a a^* + b b^* &= 1.
\end{align}
\end{subequations}
From these relations it follows that $a^* b = q ba^*$, $a^* b^* = q b^* a^*$ and $[a, a^*] = (q^2 - 1) b b^*$. If $q=1$, we recover the generators of $SU(2)$ as a commutative space.

There is a vector space basis $e_{k l m}$ of $\A(SU_q(2))$, given by monomials of the form
\begin{equation}e_{k l m}: = \begin{cases} a^k b^l {b^*}^m & k\in\Z, k\geq 0, l,m\in \N\\
b^l {b^*}^m {a^*}^{-k} & k\in \Z, k<0,l,m\in \N
\end{cases}
\label{eqn:suq2 space algebra basis}
\end{equation}

The Hilbert space of the spectral triple can be written as a direct sum
\[ \Hi = \Hi^\uparrow \oplus \Hi^\downarrow,\]
with $\Hi^\uparrow$ and $\Hi^\downarrow$ spanned by an orthonormal basis $ | j \mu n \uparrow \rangle$ and $|j\mu n\downarrow \rangle$ respectively, such that $j=0,\half,1,\frac{3}{2},\ldots$, $\mu = -j, -j+1,\ldots,j$, and $n=-j-\half, -j + \half,\ldots, j+\half$ for the $\uparrow$ part, and $n=-j+\half, -j + \half,\ldots, j-\half$ for the $\downarrow$ part. As a convenient shorthand, we will write 

\begin{equation} \jun := \begin{pmatrix} |j\mu n \uparrow\rangle\\ |j\mu n \downarrow \rangle\end{pmatrix}.\end{equation}

An equivariant representation of the algebra $\A(SU_q(2))$ on this Hilbert is given by:
\begin{proposition}[\cite{dbrowski_dirac_2005}]\label{prop:algebra action}
The following representation $\Pi$ of $\A(SU_q(2))$ on the Hilbert space $\Hi$ with orthonormal basis $|j \mu n\rrangle$, is equivariant with respect to the action of $\U_q(\sut)$ defined in~\cite{dbrowski_dirac_2005}.
\begin{subequations}\label{eqn:suq2 algebra action}
 \begin{align}
\Pi\le(a\r) \jun &= \alpha_{j\mu n}^+ | j^+ \mu^+ n^+\rrangle + \alpha_{j \mu n}^- | j^- \mu^+ n^+\rrangle\\
\Pi\le(b\r) \jun &= \beta_{j\mu n}^+ | j^+ \mu^+ n^-\rrangle + \beta_{j \mu n}^- | j^- \mu^+ n^-\rrangle\\
\Pi\le(a^*\r) \jun &= \tilde{\alpha}_{j\mu n}^+ | j^+ \mu^- n^-\rrangle + \tilde{\alpha}_{j \mu n}^- | j^- \mu^- n^-\rrangle\\
\Pi\le(b^*\r) \jun &= \tilde{\beta}_{j\mu n}^+ | j^+ \mu^- n^+\rrangle + \tilde{\beta}_{j \mu n}^- | j^- \mu^- n^+\rrangle
\end{align}
\end{subequations}
where $\alpha^\pm_{j\mu n}$,$\beta^\pm_{j\mu n}$, $\tilde{\alpha}_{j \mu n}^\pm$ and $\tilde{\beta}_{j \mu n}^\pm$  are bounded triangular $2\times 2$ matrices. For our purposes, we only need to know that they are not diagonal with respect to the $\uparrow$ and $\downarrow$ decomposition of the Hilbert space, i.e.\ the algebra maps from the $\uparrow$ to the $\downarrow$ and vice versa. 
\end{proposition}

The Dirac operator of the spectral triple constructed in~\cite[Section 5]{dbrowski_dirac_2005} is given by:
\begin{align}
 D\begin{pmatrix} |j\mu n\uparrow\rangle\\
    | j'\mu'n'\downarrow\rangle
  \end{pmatrix} &= \begin{pmatrix} \le(2j+\frac{3}{2}\r) |j\mu n\uparrow\rangle\\ -\le(2j'+\half\r)|j'\mu'n'\downarrow\rangle\end{pmatrix}.\label{eqn:Dirac operator suq2}\\
\intertext{The reality operator, constructed in~\cite[Section 6]{dbrowski_dirac_2005} is given by:}
 J\begin{pmatrix} |j\mu n\uparrow\rangle\\
    | j'\mu'n'\downarrow\rangle
  \end{pmatrix} &= \begin{pmatrix} i^{2(2j+\mu+n)} |j, -\mu, -n\uparrow\rangle\\  i^{2(2j'-\mu'-n')}|j',-\mu',-n'\downarrow\rangle\end{pmatrix}.\label{eqn:reality operator suq2}
\end{align}

Together, the algebra $\A(SU_q(2))$, with representation as given in Proposition~\ref{prop:algebra action}, Dirac operator $D$ given by \eqref{eqn:Dirac operator suq2}, and reality operator $J$ given by \eqref{eqn:reality operator suq2}, satisfy most conditions as stated in~\cite{connes_noncommutative_1995}, with some slight modifications. These modifications are that $[\Pi(x),J\Pi(y^*)J^{\dagger}]$ and $[[D,\Pi(x)],J\Pi(y^*)J^{\dagger}]$ are not exactly $0$, but lie in the two-sided ideal $\Kq$ in $B(\Hi)$ generated by the compact, positive, trace class operators
\begin{equation}
 M_q : M_q \jun = q^j \jun. \label{eqn:compact trace lens}
\end{equation}
This means that the first order and reality conditions should be modified appropriately.

Also, it is currently unknown whether the Hochschild cycle condition and the Poincar\'e duality condition are satisfied. 

\section{Topological quantum lens spaces}\label{sec:topological lens spaces}
Commutative lens spaces are defined as the quotient of an odd-dimensional sphere by a free action of a finite cyclic group.
Similarly, a $3$-dimensional quantum lens space can be defined as the invariant algebra of $SU_q(2)$ under an action of $\Z/p\Z$:
\begin{subequations}
\begin{align}
 g \triangleright a &= e^{\frac{2 \pi i}{p} r_1} a\label{eqn:g action on a}\\
 g \triangleright b &= e^{\frac{2 \pi i}{p} r_2} b,\label{eqn:g action on b}
\end{align}
\end{subequations}
with $r_1$ and $r_2$ integers. We denote the invariant algebra as $L_q(p;r_1,r_2)$. It is easy to see that $L_q(p;r_1,r_2)$ is spanned as a vector space by basis elements of the form $e_{klm}$ with $r_1 k + r_2(l-m) \equiv 0 \bmod p$ in the notation of \eqref{eqn:suq2 space algebra basis}.

Let $\epsilon(r) = (r_1+r_2) \bmod 2$. We have:
\begin{proposition}\label{prop:action of Z_p}
Choose a generator $g$ of $\Z/p\Z$. The action defined by $\A(SU_q(2))$ by \eqref{eqn:g action on a} and \eqref{eqn:g action on b} together with the unitary representation $\zpa$ of $\Z/p\Z$ on $\Hi$ given by:
\begin{equation}
 \zpa(g) \jun = e^{\frac{2 \pi i}{p} \le((r_1+r_2)\mu + (r_1-r_2) n + \half \epsilon(r)\r)} \jun,
\end{equation}
makes $\Hi$ into a left $\Z/p\Z$-comodule over $\A(SU_q(2))$.
\end{proposition}
The presence of $\epsilon(r)$ guarantees that the expression in brackets is an integer.
\begin{proof}
Since $\Z/p\Z$ is a group, we demand that the action of $\A$ on $\Hi$ is equivariant with respect to the group viewed as a Hopf algebra, with coproduct $\Delta(h) = h\ts h$ for all $h\in \Z/p\Z$. From this it follows that $\zpa(g) |j\mu n\rrangle = c_{j\mu n} |j\mu n\rrangle$, for some $c_{j\mu n}\in \C$, since otherwise this would not be compatible with the equivariance condition $\zpa(g) \Pi(a) v = \Pi(g_{(1)} a) \zpa(g_{(2)}) v$, where $g_{(1)} a$ is $e^{\frac{2 \pi i r_1}{p}} a$, as defined in \eqref{eqn:g action on a}
We can then calculate:
\begin{align*}
  \zpa(g) \Pi(a) | j\mu n\rrangle &= \Pi\le(e^{\frac{2\pi i}{p}r_1}a\r) \zpa(g) | j\mu n\rrangle,\\
  \zpa(g) \Pi(b) | j\mu n\rrangle &= \Pi\le(e^{\frac{2\pi i}{p}r_2}b\r) \zpa(g) | j\mu n\rrangle.
\end{align*}
From~\eqref{eqn:suq2 algebra action} we see that the action of $a$ and $a^*$ on $\Hi$ leaves the difference $\mu-n$ constant, and $b$ leaves the sum $\mu+n$ constant.

We get the recurrence relations $\zpa(g)|j^{\pm}\mu^+ n^+\rrangle = e^{\frac{2\pi i}{p}r_1} \zpa(g) |j \mu n\rrangle$ and $\zpa(g)|j^{\pm}\mu^+ n^-\rrangle = e^{\frac{2\pi i}{p}r_2} \zpa(g) |j \mu n\rrangle$. Solving these recurrence relations, we see that $\zpa(g) |j\mu n\rrangle = e^{\frac{2\pi i}{p} \le( (\mu+n)r_1 + (\mu-n)r_2\r) + c}$, where $c$ is any constant. In order to have $g^p=1$, we see that $c = \epsilon(r)$ plus an additional integer, which can be set to zero.
\end{proof}

If $r_1$ is coprime to $p$, we see that $L_q(p;r_1, r_2) = L_q(p;1,r_1^{-1} r_2)$ by multiplying each component by $e^{\frac{2\pi i}{p} r_1^{-1}}$ (where $r_1^{-1}$ is meant $\bmod\ p$). If $r_1$ is coprime to $p$, we will write $L_q(p,r)$ for the lens space $L_q(p;r_1,r_2)$, with $r =r_1^{-1} r_2\bmod p$.

\begin{proposition}\label{prop:freeness of coaction}
If $r = r_1^{-1} r_2 \bmod p$ is coprime to $p$, the above defined action on $SU_q(2)$ extends to its $C^*$-algebra and is free in the sense of Ellwood~\cite{Ellwood:2000}.
\end{proposition}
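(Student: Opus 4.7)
The plan is in two parts: extend the action to the $C^*$-level and then verify Ellwood's density condition. For the first, I would check that the $*$-algebra automorphism of $\A(SU_q(2))$ sending $a\mapsto e^{2\pi i/p}a$, $b\mapsto e^{2\pi i r/p}b$ preserves the defining relations \eqref{eqn:commutation relations suq2}---each relation is monomial-homogeneous in $a,a^*,b,b^*$, so both sides scale by the same phase. The resulting $*$-automorphism extends uniquely to a $C^*$-automorphism of the enveloping $C^*$-algebra $C(SU_q(2))$ by universality, lifting the $\Z/p\Z$-action $\zpa$ to the $C^*$-level.

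For Ellwood freeness of the induced coaction $\delta\colon C(SU_q(2))\to C(SU_q(2))\otimes C(\Z/p\Z)$, I would use the standard reformulation for finite abelian group coactions: decomposing $C(\Z/p\Z)=\bigoplus_{\ell\in\Z/p\Z}\C\chi_\ell$ by characters, the density of $\{(x\otimes 1)\delta(y):x,y\in C(SU_q(2))\}$ in $C(SU_q(2))\otimes C(\Z/p\Z)$ is equivalent to showing, for each nonzero $\ell\in\Z/p\Z$, that $1\in\overline{A_{-\ell}A_\ell}$, where $A_\ell\subset C(SU_q(2))$ is the closed spectral subspace on which $\zpa(g)$ acts as multiplication by $e^{2\pi i\ell/p}$.

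Now fix $\ell\in\{1,\ldots,p-1\}$. Since $\gcd(r,p)=1$, there is a unique $m_0\in\{1,\ldots,p-1\}$ with $rm_0\equiv\ell\pmod p$, so $a^\ell,b^{m_0}\in A_\ell$. An induction using $a^*a=1-q^2bb^*$ together with the commutation $a^*(bb^*)=q^2(bb^*)a^*$ (derived from $a^*b=qba^*$) produces
\[
(a^*)^\ell a^\ell=\prod_{j=1}^{\ell}(1-q^{2j}bb^*),\qquad (b^*)^{m_0}b^{m_0}=(bb^*)^{m_0},
\]
both elements of $A_{-\ell}A_\ell\cap C^*(bb^*)$. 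Under the functional-calculus isomorphism $C^*(bb^*)\cong C(\sigma(bb^*))$ with $\sigma(bb^*)\subseteq[0,1]$, these correspond to $f_1(x)=\prod_{j=1}^{\ell}(1-q^{2j}x)$ and $f_2(x)=x^{m_0}$; the latter is strictly monotone (hence separates points) and the former equals $1$ at $x=0$, so $f_1$ and $f_2$ have no common zero on $\sigma(bb^*)$. By Stone--Weierstrass the $*$-subalgebra they generate is dense in $C(\sigma(bb^*))$, so in particular $1\in\overline{A_{-\ell}A_\ell}$, giving freeness. The delicate step is the reformulation of Ellwood's density condition as the spectral-subspace criterion; once this is in place, the algebraic identities and the Stone--Weierstrass argument are routine.
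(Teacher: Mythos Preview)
Your argument is correct in outline, but one step needs more care. From $f_1,f_2\in A_{-\ell}A_\ell$ and the Stone--Weierstrass density of the algebra they generate in $C(\sigma(bb^*))$ you conclude $1\in\overline{A_{-\ell}A_\ell}$; this requires the intermediate observation that every polynomial in $f_1,f_2$ with zero constant term again lies in $A_{-\ell}A_\ell$, since $A_{-\ell}A_\ell$ is not itself multiplicatively closed. This is true (for $s\geq 1$ write $f_1^s f_2^t=(a^*)^\ell\cdot\bigl[a^\ell f_1^{s-1}f_2^t\bigr]\in A_{-\ell}\cdot(A_\ell A_0)\subseteq A_{-\ell}A_\ell$, and similarly if $s=0$, $t\geq 1$), but you should say so. A quicker route avoids Stone--Weierstrass altogether: since $f_1,f_2\geq 0$ have no common zero on $\sigma(bb^*)$, $h:=f_1+f_2$ is strictly positive and invertible in $A_0$, and then
\[
1=\bigl[h^{-1}(a^*)^\ell\bigr]\,a^\ell+\bigl[h^{-1}(b^*)^{m_0}\bigr]\,b^{m_0}\in A_{-\ell}A_\ell
\]
exactly, not just in the closure.

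The paper's proof takes a different and more elementary route. It expands the single identity $(aa^*+bb^*)^r=1$ as $a^r(a^*)^r+bP=1$ for a suitable polynomial $P$, observes that $a^r$ and $b$ lie in the \emph{same} spectral subspace $A_r$ (so that $(a^*)^r,P\in A_{-r}$), and hence obtains $1\otimes f$ in the span of $\Delta_R(A)(A\otimes 1)$, where $f(k)=e^{2\pi irk/p}$; since $\gcd(r,p)=1$, $f$ generates $C(\Z/p\Z)$ and the density follows. In spectral-subspace language this is simply $1\in A_rA_{-r}$, together with the iteration $1=\sum_i x_i\cdot 1\cdot y_i=\sum_{i,j}(x_ix_j)(y_jy_i)\in A_{2r}A_{-2r}$, and so on, covering every $\ell$ because $r$ generates $\Z/p\Z$. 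So the paper's argument is purely algebraic and handles all characters via one identity, whereas you treat each $\ell$ separately with a tailored pair $a^\ell,b^{m_0}$ and invoke functional calculus. Both are valid; the paper's is shorter, while yours makes the spectral-subspace criterion for freeness explicit and would adapt more readily when no single ``generating'' identity is available.
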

Strictly speaking, what we prove here is freeness of \emph{coaction}, not of the action, but it is clear that in the simple case of $\Z/p\Z$ there is a simple $1-1$ mapping between actions and coactions.
\begin{proof}
First of all, we can easily translate the action of $\Z/p\Z$ to the right 
coaction of $C(\Z/p\Z)$ on the $SU_q(2)$ algebra:
\[ \cop_R x = \sum_{h \in \Z/p\Z} (h \triangleright x) \ts \delta_h, \]
where $\delta_h$ is the function defined by $\delta_h(h) = 1$ and $0$ on other group elements, and extended by linearity.
Recall that that the freeness of a coaction of a Hopf algebra $H$ on a $C^*$-algebra 
$A$ means (for a right coaction) that the spans of $ (A \ts \id) \cop_R (A) $  and 
$ \cop_R(A) (A \ts \id)$ are dense in $A \ts H$ for the minimal tensor product.

For the action, which defines lens spaces the freeness is easy to verify. Consider the 
identity:
\[ {(a a^* + b b^*)}^r = 1,\]
which can be rewritten, using the commutation relations as:
\[ a^r {(a^*)}^r + b P(a,a^*,b,b^*) = 1, \]
where $P$ is some polynomial in the generators. Therefore,
\[ \cop_R(a^r) \left( {(a^*)}^r \ts 1 \right) + \cop_R(b) \left( P(a,a^*,b,b^*) \ts 1 \right) =
    \sum_{k=0}^{p-1} e^{2\pi i \frac{rk}{p}} \ts \delta_k = 1 \ts f, \]
where $f$ is the function on $\Z/p\Z$:
\[ f(k) =  e^{2\pi i \frac{rk}{p}}. \]
As for $r>0$ and $p$ relatively prime the function $f$ generates the algebra $C(\Z/p\Z)$ 
this finishes the proof. 
\end{proof}
In the remainder we will assume that $r$ and $p$ are coprime.

Observe that if we replace $r$ by $r-p$ the action on the generators does not change. If we take $p$ even then $r$ is necessarily odd, and $\epsilon(r)$ is $0$. If $r$ is even, then $p$ is necessarily odd, and $r-p$ is odd, and on the Hilbert space the actions determined by $(p,r)$ and $(p,r-p)$ are equivalent since:
\begin{align*}
 e^{\frac{2 \pi i}{p} \le(( 1+ (r-p))\mu + (1-(r-p))n\r)} &= \le( e^{\frac{2 \pi i p (n-\mu)}{p}} e^{-\frac{\pi i \epsilon(r)}{p}}\r) e^{\frac{2\pi i}{p} \le((1+r)\mu + (1-r)n + \half \epsilon(r) \r)}\\
&= \le( - e^{-\frac{\pi i \epsilon(r)}{p}}\r) \rho(g),
\end{align*}
where we have used that $n-\mu \in \half\Z\backslash\Z$. Since
\begin{equation}
 {\le(-e^{-\frac{\pi i \epsilon(r)}{p}}\r)}^p = 1,
\end{equation}
for odd $p$, the actions are equivalent.
For this reason, if $r$ is coprime to $p$, we can always take $r$ odd, and $\epsilon(r) = 0$.
Let $K= 0, 1, 2,\ldots, p-1$. We define $\Hi_K$ as the eigensubspace of $\Hi$ for the action of $g$ with eigenvalue $e^{\frac{2 \pi i K}{p}}$. Further, let $L_q(p,r) \simeq L_q(p;1,r)$ be the subalgebra of $\A(SU_q(2))$ which is invariant under the action of $\Z/p\Z$. By construction, we have $x\in L_q(p,r)$, $\Pi(x) \Hi_K \subset \Hi_K$. It is also obvious that $D \Hi_K \subset \Hi_K$.

\begin{proposition}\label{prop:lens space real structure} The equivariant real structure $J$, as given in~\eqref{eqn:reality operator suq2} satisfies:
\begin{equation} J \Hi_K = \Hi_{K'},\end{equation}
where $K+K' \equiv 0 \bmod p$.
\end{proposition}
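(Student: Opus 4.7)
The plan is to use the explicit diagonal description of both $\zpa(g)$ and $J$ on the basis $|j\mu n\rrangle$ and simply compare eigenvalues. Since we established above that $r$ may be chosen odd and hence $\epsilon(r)=0$, the basis vectors $|j\mu n\rrangle$ are eigenvectors of $\zpa(g)$ with eigenvalue $e^{2\pi i K/p}$ where $K \equiv (1+r)\mu + (1-r)n \pmod p$, and this eigenvalue depends only on $\mu$ and $n$ (not on the $\uparrow/\downarrow$ label or on $j$).

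First I would fix a basis vector $|j\mu n\rrangle$ in $\Hi_K$ and compute $J|j\mu n\rrangle$ component by component using the formulas \eqref{eqn:reality operator suq2}. In both the $\uparrow$ and $\downarrow$ components, $J$ sends $(j,\mu,n)$ to $(j,-\mu,-n)$ up to a phase factor of modulus one; in particular the result is again a scalar multiple of the combined basis vector $|j,-\mu,-n\rrangle$.

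Then I would apply $\zpa(g)$ to $J|j\mu n\rrangle$. By Proposition~\ref{prop:action of Z_p}, the eigenvalue of $|j,-\mu,-n\rrangle$ under $\zpa(g)$ is
\[
e^{\frac{2\pi i}{p}\bigl((1+r)(-\mu) + (1-r)(-n)\bigr)} = e^{-\frac{2\pi i K}{p}},
\]
so $J|j\mu n\rrangle$ lies in $\Hi_{K'}$ with $K' \equiv -K \pmod p$, i.e.\ $K+K' \equiv 0 \pmod p$. Since the basis vectors $|j\mu n\rrangle$ with $(1+r)\mu + (1-r)n \equiv K \pmod p$ span $\Hi_K$, it follows that $J \Hi_K \subseteq \Hi_{K'}$, and the reverse inclusion follows from $J^2 = \pm \Id$ (or directly by the symmetric argument starting from $\Hi_{K'}$).

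There is no real obstacle here; the only point requiring a small sanity check is that the $\uparrow$ and $\downarrow$ components of $J|j\mu n\rrangle$ indeed land in the same eigenspace of $\zpa(g)$, which is immediate since the $\zpa(g)$-eigenvalue is independent of the $\uparrow/\downarrow$ label (and independent of $j$). The choice $\epsilon(r)=0$ is what makes the eigenvalue an odd function of $(\mu,n)$ and hence makes $-K$ the correct image; if one instead worked with $\epsilon(r)=1$, the same argument still goes through but with an unimportant global phase shift absorbed into the identification $p\mid(K+K')$.
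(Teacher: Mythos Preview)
Your proposal is correct and follows essentially the same approach as the paper: both arguments use that $J|j\mu n\rrangle$ is a scalar multiple of $|j,-\mu,-n\rrangle$, and that negating $(\mu,n)$ negates the congruence class $(1+r)\mu+(1-r)n \bmod p$. You have simply supplied more detail (the reverse inclusion via $J^2=\pm\Id$, the check that the $\uparrow/\downarrow$ label is irrelevant, and the role of $\epsilon(r)$) than the paper's terse version.
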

\begin{proof}
We have $J|j\mu n\rrangle = c_{j\mu n} |j-\mu -n \rrangle$ with $c_{j\mu n}$ the complex number defined in \eqref{eqn:reality operator suq2}, and from \[(1+r)\mu + (1-r) n = K \bmod p,\] it follows that \[(1+r)\cdot (-\mu) + (1-r)\cdot (-n) = -K \bmod p.\qedhere\]
\end{proof}

\section{Geometry of quantum lens spaces}\label{sec:geometry of quantum lens}
We now turn to the geometrical properties of the almost real spectral triple of the quantum lens space. As stated at the end of Section~\ref{sec:spectral triple suq2}, we modify some conditions of a real spectral triple, exactly as in~\cite{dbrowski_dirac_2005}, i.e.\ the real structure and the first order condition only hold up to the compact operators of positive trace class defined in~\eqref{eqn:compact trace lens}.

Furthermore, it is unknown if the finiteness condition, the Hochschild cycle condition and the Poincar\'e duality are satisfied for $SU_q(2)$. This means that also for $L_q(p,r)$ we do not know if they are satisfied.

We call a structure, satisfying all conditions of~\cite{connes_noncommutative_1995}, with the modification of the first order condition, and the removal of the finiteness condition, the Hochschild cycle condition and Poincar\'e duality an \emph{almost real spectral triple}.

\begin{proposition} Let $L_q(p,r)$, $q\in(0,1)$, be the quantum lens space as defined above. Then for any $K=0,1,\ldots p-1$, the Hilbert space $\Hi_K \oplus \Hi_{K'}$, where $K+K' \equiv 0 \bmod p$, the reality structure $J$ and the Dirac operator $D$ taken as the restrictions of $J$ and $D$ from the $\A(SU_q(2))$ almost real structure constitute an almost real spectral triple over the quantum lens space $L_q(p,r)$.\end{proposition}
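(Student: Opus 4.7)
The strategy is twofold: first verify that the subspace $\Hi_K \oplus \Hi_{K'}$ is invariant under each of the three pieces of operator data $\Pi(L_q(p,r))$, $D$, and $J$, so that the restrictions are well-defined; then observe that every axiom of an almost real spectral triple satisfied by the ambient data $(\A(SU_q(2)),\Hi,D,J)$ descends to the restriction. No genuinely new analytic input is required; the work is entirely in combining the preceding three propositions with a careful restriction argument.

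For the invariance step, Proposition~\ref{prop:generators of lens space} directly gives $\Pi(x)\Hi_K \subset \Hi_K$ for all $x \in L_q(p,r)$, and the same statement applied to $K'$ shows that $\Pi(L_q(p,r))$ preserves the direct sum. The Dirac operator acts diagonally in the basis $|j\mu n\rrangle$ with eigenvalue depending only on $j$ and the $\uparrow/\downarrow$ label, by~\eqref{eqn:Dirac operator suq2}; since $\zpa(g)$ is also diagonal in this basis, $D$ commutes with $\zpa(g)$ and therefore preserves each of its eigenspaces $\Hi_K$. Finally, Proposition~\ref{prop:lens space real structure} gives $J\Hi_K = \Hi_{K'}$ and, by symmetry, $J\Hi_{K'} = \Hi_K$, so $J$ preserves $\Hi_K \oplus \Hi_{K'}$.

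For axiom inheritance, the restriction of a $*$-representation is a $*$-representation. The spectrum of $D$ on $\Hi_K \oplus \Hi_{K'}$ is a subset of the original spectrum with the same growth $|D| \sim 2j$ (with reduced but nonvanishing multiplicities), so compact resolvent and the same summability degree persist. Boundedness of $[D,\Pi(x)]$ for $x \in L_q(p,r) \subset \A(SU_q(2))$, as well as $J^2 = \pm 1$, $JD = \pm DJ$, and antiunitarity of $J$, all restrict verbatim. The point that deserves the most care, and which I expect to be the only mildly nontrivial one, is the modified first-order condition and the mod-$\Kq$ commutation of the algebra with its opposite: the generators $L_q$ of $\Kq$ from~\eqref{eqn:compact trace lens} are diagonal in the $|j\mu n\rrangle$ basis with eigenvalues $q^j$, so they preserve every subspace $\Hi_K$, and their restrictions are still compact and positive trace class. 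Consequently $\Kq$ restricts to a two-sided ideal of the analogous form on $\Hi_K \oplus \Hi_{K'}$, and the conditions $[\Pi(x),J\Pi(y^*)J^{\dagger}] \in \Kq$ and $[[D,\Pi(x)],J\Pi(y^*)J^{\dagger}] \in \Kq$ for $x,y \in L_q(p,r)$ follow from their validity for the larger algebra $\A(SU_q(2))$. The Hochschild cycle condition and Poincar\'e duality are not required in the definition of an almost real spectral triple, which is necessary since they are unsettled even for $\A(SU_q(2))$.
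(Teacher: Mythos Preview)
Your proof is correct and follows essentially the same route as the paper: establish invariance of $\Hi_K \oplus \Hi_{K'}$ under $\Pi(L_q(p,r))$, $D$, and $J$ via the preceding propositions, then inherit each axiom of an almost real spectral triple from the ambient $\A(SU_q(2))$ geometry, with the mod-$\Kq$ first-order and opposite-algebra conditions handled by noting that the diagonal operators $L_q$ respect the $\Hi_K$ decomposition. The paper's version additionally singles out the regularity (smoothness) condition explicitly, but this is already covered by your general subalgebra-inheritance argument.
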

\begin{proof}
Almost all the usual conditions (KO-homology class, regularity) for a weakly real spectral triple are easily seen to carry over from the $SU_q(2)$ case. The only slightly non-trivial conditions are the compact resolvent condition, the metric dimension and the finiteness condition. 
 
The compact resolvent condition and metric dimension follow from the fact that for $j$ big enough, there always exist $\mu$ and $n$ such that there are vectors $\jun \in \Hi_K$, and thus also in $\Hi_{K'}$, hence the dimension growth is satisfied. The compact resolvent condition also follows from this, and the fact that the dimension of the kernel of $D$ is finite dimensional.
\end{proof}

The finiteness condition is the statement that the of smooth vectors $\Hi_K^\infty := \bigcap_{k=1}^\infty \text{Dom}D^k$ is a finitely generated projective module over the smooth algebra. The set of smooth vectors of $SU_q(2)$, and thus of $L_q(p,r)$ is the classical one, however the precise algebra of smooth elements of the algebra is as of yet unknown, and thus we can at the moment not say something about the finiteness. We have the following partial result:
\begin{lemma}\label{lem:finitely generated line bundles}
The subspaces $\A_K\subset \A(SU_q(2))$, $K\in \Z/p\Z$, $\A_K := \{a\in \A(SU_q(2)): \rho(g) a = e^{\frac{2\pi i}{p} K} a\}$, are finitely generated projective modules over $L_q(p,r)$.
\end{lemma}
\begin{proof}
It is clearly enough to find a finite set $\{\xi_i\}_{i=1}^n \in \A_K$ such $\sum_{i=1}^n \xi_i^* \xi_i = 1$. This then implies that the $\xi_i$ generate $\A_K$ as a left module, and projection $p\in M_n(\A_0)$ given component-wise by $p_{ij}= \xi_i \xi_j^*$. Also, it is enough to check that such a set exists for $\A_1$, because then a set for $\A_K$ is then given by taking all possible $K$-fold products of $\xi_i$ for $\A_1$.

Such a set for $\A_1$ can be constructed as follows.
We start with $a^* a + q^2 b^* b (=1)$. It is clear that $a\in \A_1$. If $q=0$, this is enough. Assume $q\neq 0$. If $r=1$, $b\in \A_1$ and we are also done.
If $r>1$, $b\notin \A_1$. In order to construct elements in $\A_1$, we use repeated multiplication by suitable decompositions of $1$, for example $b^* b = b^* a a^* b + b^*b b^* b$, etc.

From an element with decomposition in $\A_k$, this will construct two elements, one with decomposition in $\A_{k-1}$ (adding an $aa^*$-term), and one with decomposition in $\A_{k-r}$ (adding a $bb^*$-term). One can keep iterating this process for each element with a decomposition not in $\A_1$. We see that the minimal exponent of the $b^*$ part of elements with no decomposition in $\A_1$ is an increasing function. This is because after at most $k-1$ steps, adding just $aa^*$ will give us an element in $\A_1$ from an element in $\A$. However, if the exponent of $b^*$ is big enough, one can rearrange the $b$ and $b^*$ in such a way, picking up some powers of $q$ from the non-trivial commutation relations of $a$ and $b$, that one can find a decomposition in $\A_1$. 

Switching a $b$ with a $b^*$ will make an element with decomposition in $\A_k$ into an element with decomposition $\A_{k+2r}$. If $p$ is odd, and the exponent of $b^*$ is high enough in the element, one can chose a decomposition in $\A_1$, because $2r$ and $p$ are coprime. If $p$ is even this only happens when $k$ is odd, but this can always be arranged, by observing that the elements are of the form $a^j {(b^*)}^{n-j} b$ after $n$ steps, which lies in $\A_k$, $k=r(-1+n-j) -j$. We know $r$ is odd, so if $n$ is even this is always odd.
\end{proof}

\subsection{Irreducibility}\label{ssec:irred}

Though any of the above listed spectral geometries for the quantum lens spaces is admissible from the point of view of the noncommutative axiomatic approach, not all correspond to spin structures on commutative lens spaces. We demand that our spectral triple be irreducible as the analogue of a connected manifold in commutative geometry. 
If we use~\cite[Definition 11.2]{bonda_elements_2001} or~\cite[Definition 2.1]{iochum_classification_2004}, all 
real spectral triples above are irreducible, since the $J$ operator interchanges the $\Hi_K$ and $\Hi_{K'}$ spaces. If we use the definition of~\cite[Remark 6 on p.163]{connes_gravity_1996}, only irreducibility with respect to the algebra action and Dirac operator are demanded. None of the above real spectral triples are then irreducible, however the cases where $K=K'=0$ and $K=K'=p/2$ if $p$ is even can be made irreducible by dropping one of the two copies of the Hilbert space, and setting the real spectral triple to be $(L_q(p,r),\Hi_K,D,J)$. Since $J\Hi_K \subset \Hi_K$ in this case, this is a well-defined spectral triple, and irreducible. Since the action of the Dirac operator is diagonal with respect to the $\uparrow$ and $\downarrow$ decomposition of the Hilbert space, it is crucial for this to work that the algebra action is not diagonal.

For even $p$ we obtain two possible spin structures, for odd $p$ just one, just as in the commutative case~\cite{franc_spin_1987}.
\begin{theorem}\label{thm:lens spaces}
The quantum lens space $L_q(p,r)$ admits one irreducible almost real spectral triple coming from the spectral triple on $SU_q(2)$ if $p$ is odd, and two if $p$ is even.
The spectral geometries are given in the two cases by
 \begin{itemize}
  \item $p \bmod 2 = 1$: $(L_q(p,r),\Hi_0,D,J)$.
  \item $p = 2P$: $(L_q(p,r),\Hi_0,D,J)$ and $(L_q(p,r),\Hi_P,D,J)$.
 \end{itemize}
with $D$ the Dirac operator as described in~\eqref{eqn:Dirac operator suq2} and $J$ the operator given in~\eqref{eqn:reality operator suq2}.
\end{theorem}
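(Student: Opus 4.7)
The plan is to use Propositions~\ref{prop:generators of lens space} and~\ref{prop:lens space real structure} to describe exactly which eigensubspaces $\Hi_K$ give rise to a stand-alone spectral triple, and then to check irreducibility in the sense of~\cite{connes_gravity_1996} for each of those. By Proposition~\ref{prop:generators of lens space}, $\Pi(L_q(p,r))$ preserves every $\Hi_K$; the Dirac operator $D$ from~\eqref{eqn:Dirac operator suq2} is diagonal on the basis $\jun$ and so also preserves each $\Hi_K$. By Proposition~\ref{prop:lens space real structure}, however, $J$ sends $\Hi_K$ to $\Hi_{K'}$ with $K+K' \equiv 0 \pmod p$. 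Hence for $(L_q(p,r),\Hi_K,D,J)$ to be a well-defined almost real spectral triple without having to pass to $\Hi_K \oplus \Hi_{K'}$, I would impose the equation $K \equiv K' \pmod p$, i.e.\ $2K \equiv 0 \pmod p$. Solving this gives exactly $K=0$ when $p$ is odd, and $K\in\{0,P\}$ when $p=2P$ is even, which is the list asserted in the theorem.

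Next I would verify that, for these distinguished values of $K$, the restricted data $(L_q(p,r),\Hi_K,D,J)$ actually satisfy all the almost-real-spectral-triple axioms. All the conditions checked in the previous proposition (compact resolvent, dimension growth, regularity, $J^2=-1$, $JD=DJ$, the $\Kq$-modified opposite algebra and first order conditions) restrict from $\Hi_K\oplus\Hi_{K'}$ to $\Hi_K$ without any change, since both $\Pi(L_q(p,r))$, $D$ and (in these cases) $J$ preserve $\Hi_K$.

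The remaining content is irreducibility, read in the sense of~\cite[Remark~6, p.163]{connes_gravity_1996}: no proper closed subspace of $\Hi_K$ is invariant under both $\Pi(L_q(p,r))$ and $D$. For values of $K$ with $K\nequiv K' \pmod p$, the direct-sum triple on $\Hi_K\oplus\Hi_{K'}$ is reducible in this sense, because each summand is already $(\Pi,D)$-invariant; so those cases do not contribute new irreducible triples. For the distinguished $K=0$ (and $K=P$ when $p=2P$), I would argue irreducibility by exploiting the ladder structure of Proposition~\ref{prop:algebra action}: the elements $a,a^*$ shift $(j,\mu,n) \mapsto (j^\pm,\mu^\pm,n^\pm)$ in a way that changes $\mu - n$ by $\pm 1$, and $b,b^*$ change $\mu+n$ by $\pm 1$, each preserving the congruence class $(1+r)\mu+(1-r)n \bmod p$. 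By iterating within the $K$-eigenspace one reaches every basis vector $\jun\in\Hi_K$ from any starting one (up to a set of zero coefficients that is controlled by the triangular structure of the matrices~\eqref{eqn:written out suq2 algebra action}). The Dirac operator, having distinct eigenvalues $2j+\tfrac{3}{2}$ and $-(2j+\tfrac{1}{2})$ on the $\uparrow$ and $\downarrow$ components, forbids any further splitting into invariant subspaces crossing the up/down decomposition.

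The main obstacle I expect is the irreducibility argument in the last step: the triangular coefficients in~\eqref{eqn:written out suq2 algebra action} can vanish at boundary values of $n$ and $\mu$, so one must argue carefully that, starting from any basis vector in $\Hi_K$, repeated application of $a,a^*,b,b^*$ still sweeps out all allowed $(j,\mu,n)$ triples in $\Hi_K$, rather than being trapped in a proper sub-ladder. Once this transitivity is established and combined with the $D$-eigenvalue separation of the $\uparrow/\downarrow$ sectors, the irreducibility of the two (respectively, one) listed spectral geometries follows, completing the classification.
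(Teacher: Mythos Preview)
Your proposal is correct and follows essentially the same route as the paper: the theorem is stated in the paper without a separate proof, the argument being the preceding discussion in the Irreducibility subsection, which makes exactly your points---$J$ forces $2K\equiv 0\pmod p$, giving $K=0$ (and $K=P$ when $p=2P$), while the $\Hi_K\oplus\Hi_{K'}$ triples with $K\neq K'$ are visibly reducible under $(\Pi,D)$. The one difference is that the paper simply asserts irreducibility of the resulting triples on $\Hi_0$ and $\Hi_P$ without further argument, whereas you sketch a ladder/transitivity proof and correctly flag the boundary-coefficient issue as the delicate step; so your worry about the ``main obstacle'' is in fact a point the paper does not address in detail.
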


\subsection{The spectrum of the Dirac operator}\label{subsec:lens space spectrum}
Let us recall that the spectrum of the Dirac operator over $\A(SU_q(2))$ (with appropriate normalization) is given by:
\begin{align*}
 D | j,\mu,n,\uparrow\rangle &= \le(2 j + \frac{3}{2}\r) | j,\mu,n,\uparrow\rangle & \text{with } j=0,\half,\ldots\\
&\text{with multiplicity: } (2j+1)(2j+2)\\
D | j,\mu,n,\downarrow\rangle &= -\le(2j +\frac{1}{2}\r) | j,\mu,n,\downarrow\rangle & \text{with } j=\half,1,\ldots\\
&\text{with multiplicity: } (2j +1) 2j
\end{align*}
Note that the spectrum is symmetric. Since in the construction of the spectral geometries on quantum lens spaces we keep the Dirac operator of $SU_q(2)$ (and only restrict the Hilbert space), the spectrum remains unchanged, only the multiplicities differ. 

By construction the spectral triple thus constructed has a Dirac spectrum independent of $q$, and for us to compute this spectrum it would be enough to refer to a computation of the Dirac spectrum as computed in the commutative case, for example~\cite[Theorem 5]{bar_dirac_1992} and~\cite{MR3108692}. However, there the spectrum is only explicitly computed for the $L_q(p,p-1)$ in the notation used in this article, so we give a formula with which to compute the Dirac spectrum of a (quantum) lens space for all values of $p,r$ coprime, for all spin structures. 

With the explicit description of the weakly real spectral triple $(L_q(p,r),\Hi_K,D)$, we can reduce the problem of computing these multiplicities to a number theoretic problem of solving congruence relations.

\begin{proposition}\label{prop:spectrum on lens space}
The eigenvalues of $D$ belonging to an irreducible almost real spectral triple as in Theorem~\ref{thm:lens spaces} with Hilbert space $\Hi_K$ ($K=0$ or $K=\half p$) are $2j + \frac{3}{2}$ and $-2j - \half$, with respective multiplicity $N^+(j)$ and $N^-(j)$, (either of them could be $0$, which means that the value is not present in the spectrum) where $N^{\pm}(j)$ denotes the number of solutions to the equation:
\begin{equation} (1+r) \mu + (1-r) n \equiv K \bmod p\label{eqn:values for mu and n}\end{equation}
with $-j\leq \mu \leq j$ and $-(j\pm\half) \leq n \leq j \pm \half$.
\end{proposition}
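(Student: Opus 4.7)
The plan is to read off the multiplicities directly from the three structural facts already at our disposal: the explicit form of the Dirac operator, the characterization of $\Hi_K$ as an eigenspace of $\zpa(g)$, and the range of indices entering the definition of $|j\mu n \uparrow\rangle$ and $|j\mu n\downarrow\rangle$. No genuinely new idea is required; the work is in matching indices carefully.

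First I would recall that, by~\eqref{eqn:Dirac operator suq2}, $D$ is already diagonal in the basis $|j\mu n\uparrow\rangle,\,|j\mu n\downarrow\rangle$, with eigenvalues $2j+\tfrac{3}{2}$ on every up-vector and $-(2j+\tfrac{1}{2})$ on every down-vector, independently of $\mu$ and $n$. By Proposition~\ref{prop:action of Z_p}, working under the standing assumption that $r$ is odd (so $\epsilon(r)=0$), each such basis vector is also an eigenvector of $\zpa(g)$, with eigenvalue $\exp\bigl(\tfrac{2\pi i}{p}[(1+r)\mu+(1-r)n]\bigr)$. Consequently $\Hi_K$ is spanned by precisely those $|j\mu n\rrangle$ for which
\begin{equation*}
(1+r)\mu+(1-r)n\equiv K\bmod p,
\end{equation*}
and the restriction of $D$ to $\Hi_K$ remains diagonal in this sub-basis.

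Therefore the multiplicity of $+(2j+\tfrac{3}{2})$ in the spectrum of $D|_{\Hi_K}$ is the number of pairs $(\mu,n)$ with $-j\le\mu\le j$ and $-(j+\tfrac{1}{2})\le n\le j+\tfrac{1}{2}$ satisfying this congruence, while the multiplicity of $-(2j+\tfrac{1}{2})$ is the number of such pairs with $-j\le\mu\le j$ but now $-(j-\tfrac{1}{2})\le n\le j-\tfrac{1}{2}$; these are $N^{+}(j)$ and $N^{-}(j)$ respectively. The only subtle point is the range of $n$, which differs between up- and down-vectors because of the convention that $|j\mu n\downarrow\rangle$ vanishes at $n=\pm(j+\tfrac{1}{2})$ and for $j=0$, so the down-count naturally collapses to zero in low-$j$ cases. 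In the irreducible spectral triples of Theorem~\ref{thm:lens spaces} we have $K\in\{0,p/2\}$ and the Hilbert space is just $\Hi_K$, so no double counting arises.

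The main (and essentially only) obstacle is thus bookkeeping: verifying that the index ranges in the definitions~\eqref{eqn:def of downvector}, \eqref{eqn:def of upvector} transcribe faithfully into the stated $n$-ranges, and noting that the possibility $N^{\pm}(j)=0$ corresponds to the eigenvalue simply being absent from the spectrum. With that in hand, the proposition follows at once from partitioning the basis of $\Hi_K$ according to the up/down decomposition and counting.
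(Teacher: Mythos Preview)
Your proposal is correct and mirrors the paper's implicit reasoning: the paper provides no separate proof of this proposition, treating it as an immediate consequence of the diagonal form of $D$ in~\eqref{eqn:Dirac operator suq2}, the description of $\Hi_K$ as the $\zpa(g)$-eigenspace from Proposition~\ref{prop:action of Z_p}, and the admissible ranges of $\mu$ and $n$ in the up/down basis. Your careful unpacking of the index bookkeeping is exactly what the paper leaves to the reader.
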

The exact calculation of the number of eigenvalues is a tedious task. The solution depends heavily on the properties of $(1+r)$ and $(1-r)$, in particular on the greatest common divisor of $1\pm r$ and $p$. Although in each case the explicit solutions for $\mu$ and $n$ can be easily found, calculating the number of solutions for a given $j$ is rather difficult for an abstract choice of $r$ and $p$.

 \tikzstyle{nonvertex}=[circle,fill=black!10,minimum size=2pt,inner sep=0pt]
 \tikzstyle{vertex}=[circle,fill=black!35,minimum size=4pt,inner sep=0pt]
 \tikzstyle{lensvertexint}=[star,fill=black,minimum size=6pt,inner sep=0pt]
 \tikzstyle{lensvertexhalf}=[diamond,fill=black,minimum size=8pt,inner sep=0pt]

%
\newcommand{\drawnode}[2]{
  \pgfmathparse{int(mod(#1+#2-1,2))}
	  \ifnum\pgfmathresult=0{
	    \pgfmathparse{int(mod((1+\r)*#1/2+(1-\r)*#2/2,\p))}
	    \ifnum\pgfmathresult=0{
	      \pgfmathparse{int(mod(#2,2))}
	      \ifnum\pgfmathresult=0{\node[lensvertexint] at (#1/2,#2/2) {};}
	      \else{\node[lensvertexhalf] at (#1/2,#2/2) {};}\fi}
	    \else{\node[vertex] at (#1/2,#2/2) {};}\fi}
	  \else{\node[nonvertex] at (#1/2,#2/2) {};}\fi
}%

\newcommand{\drawnodep}[2]{
  \pgfmathparse{int(mod(#1+#2-1,2))}
	  \ifnum\pgfmathresult=0{
	    \pgfmathparse{int(abs(mod((1+\r)*#1/2+(1-\r)*#2/2,\p)))}
	    \ifnum\pgfmathresult=1{
	      \pgfmathparse{int(mod(#2,2))}
	      \ifnum\pgfmathresult=0{\node[lensvertexint] at (#1/2,#2/2) {};}
	      \else{\node[lensvertexhalf] at (#1/2,#2/2) {};}\fi}
	    \else{\node[vertex] at (#1/2,#2/2) {};}\fi}
	  \else{\node[nonvertex] at (#1/2,#2/2) {};}\fi
}%
\newcommand{\drawj}[1]{
  \draw[style=help lines,dashed] (-#1-1/2,-#1) rectangle (#1+1/2,#1);
}%

For illustration, we show here some pictures of what the spectra for small $p$ looks like. We represent the basis vectors $|j\mu n \uparrow\rangle,|j\mu n \downarrow\rangle$ of the Hilbert space $\Hi$ of the spectral triple on $SU_q(2)$ as defined in Proposition~\ref{prop:algebra action} as a lattice, with $\mu$ on the horizontal axis and $n$ on the vertical axis. We project the $j$ coordinate away, since it does not play a role for determining whether a vector lies in $\Hi_K$, only in confining the possible $\mu$ and $n$. We illustrate this by drawing lines through the allowed values (in the form of rectangles around the origin) for $j=3$ and in the $\uparrow$ part of $\Hi$. We draw a circle \begin{tikzpicture}[scale=0.7] \node[shape=circle,draw] {};\end{tikzpicture} through the origin $\mu=0,n=0$.

The stars (\begin{tikzpicture}\node[lensvertexint] {};\end{tikzpicture}) and diamonds(\begin{tikzpicture}\node[lensvertexhalf] {};\end{tikzpicture}) represent basis vectors of the $\Hi_K$ Hilbert space of the lens space $(L_q(p,r),\Hi_K,D,J)$. The circles (\begin{tikzpicture}\node[vertex] {};\end{tikzpicture}) represent basis vectors of the Hilbert space of $SU_q(2)$ which are not part of the lens space. The stars are the allowed values for integer valued $j$, the diamonds are the allowed values for half-integer valued $j$.
\begin{figure}[!htb]
\begin{minipage}[b]{0.49\textwidth}
\centering
 \begin{tikzpicture}[scale=0.7]
 \def\p{2}
 \def\r{1}
 \foreach \n in {-7,-6,...,7}
   \foreach \m in {-6,-5,...,6}
      {\drawnode{\n}{\m}}
\node[shape=circle,draw] at (0,0) {};
\foreach \j in {0,1,...,3}
  {\drawj{\j}}
\end{tikzpicture}
\caption{$\Hi_0$ for $p=2,r=1$}
\end{minipage}
\begin{minipage}[b]{0.49\textwidth}
  \begin{tikzpicture}[scale=0.7]
 \def\p{2}
 \def\r{1}
 \foreach \n in {-7,-6,...,7}
   \foreach \m in {-6,-5,...,6}
      {\drawnodep{\n}{\m}}
\node[shape=circle,draw] at (0,0) {};
\foreach \j in {0,1,...,3}
  {\drawj{\j}}
\end{tikzpicture}
\caption{$\Hi_1$ for $p=2,r=1$}
\end{minipage}
\end{figure}\\
\begin{figure}[!htb]
\begin{minipage}[b]{0.49\textwidth}
 \begin{tikzpicture}[scale=0.7]
 \def\p{5}
 \def\r{1}
 \foreach \n in {-7,-6,...,7}
   \foreach \m in {-6,-5,...,6}
      {\drawnode{\n}{\m}}
\node[shape=circle,draw] at (0,0) {};
\foreach \j in {0,1,...,3}
  {\drawj{\j}}
\end{tikzpicture}
\caption{$\Hi_0$ for $p=5,r=1$}\label{fig:lattice p=5 r=1}
\end{minipage}
\begin{minipage}[b]{0.49\textwidth}
 \begin{tikzpicture}[scale=0.7]
 \def\p{5}
 \def\r{-3}
 \foreach \n in {-7,-6,...,7}
   \foreach \m in {-6,-5,...,6}
      {\drawnode{\n}{\m}}
\node[shape=circle,draw] at (0,0) {};
\foreach \j in {0,1,...,3}
  {\drawj{\j}}
\end{tikzpicture}
\caption{$\Hi_0$ for $p=5,r=-3$}\label{fig:lattice p=5 r=-3}
\end{minipage}
\end{figure}
\begin{figure}[!htb]
\begin{minipage}[b]{0.49\textwidth}
 \begin{tikzpicture}[scale=0.6]
 \def\p{7}
 \def\r{-5}
 \foreach \n in {-8,-7,...,8}
   \foreach \m in {-8,-7,...,8}
      {\drawnode{\n}{\m}}
\node[shape=circle,draw] at (0,0) {};
\foreach \j in {0,1,...,3}
  {\drawj{\j}}
\end{tikzpicture}
\caption{$\Hi_0$ for $p=7,r=-5$}\label{fig:lattice p=7 r=-5}
\end{minipage}
\begin{minipage}[b]{0.49\textwidth}
 \begin{tikzpicture}[scale=0.6]
 \def\p{7}
 \def\r{3}
 \foreach \n in {-8,-7,...,8}
   \foreach \m in {-8,-7,...,8}
      {\drawnode{\n}{\m}}
\node[shape=circle,draw] at (0,0) {};
\foreach \j in {0,1,...,3}
  {\drawj{\j}}
\end{tikzpicture}
\caption{$\Hi_0$ for $p=7,r=3$}\label{fig:lattice p=7 r=3}
\end{minipage}
\end{figure}

\section{Unitary equivalences}

It is known~\cite{reidemeister_homotopieringe_1935} that two commutative lens spaces $L(p,r)$ and $L(p',r')$ are homeomorphic if and only if $p=p'$ and $r' r \equiv \pm 1 \bmod p$, or $r' \pm r \equiv 0 \bmod p$. 

It is also known that two 3-dimensional lens spaces are homeomorphic if and only if they are (Laplace) isospectral, see~\cite{ikeda_spectra_1979}.

That these concepts are related for lens spaces can be intuitively understood by looking at diagrams as in Section~\ref{subsec:lens space spectrum}. There we see that the diagram of $L(p,r)$ is the same as the diagram of $L(p,r')$, $r'=\pm r^{\pm1}$, up to rotation, mirroring and interchanging the stars and diamonds. For example, $3 \equiv -{(-5)}^{-1} \bmod 7$, and we see in Figures~\ref{fig:lattice p=7 r=-5} and~\ref{fig:lattice p=7 r=3} that they are the same if we rotate Figure~\ref{fig:lattice p=7 r=-5} one quarter clockwise and flip the stars and diamonds. The $L(5,1)$ and $L(5,-3)$ lens space diagrams of Figures~\ref{fig:lattice p=5 r=1} and~\ref{fig:lattice p=5 r=-3}  are very different however, and not related by mirroring and rotations by quarter-turns. 

In~\cite{ikeda_spectra_1979}, using results on these type of lattices from~\cite{MR558314}, it is then shown that these types of lattice isomorphism induce an isomorphism of the algebra of smooth functions.

In the noncommutative case when $q\neq 1$, the lens spaces $L_q(p,r)$ and $L_q(p,r')$ are shown to be unitary equivalent when $r=\pm r'$, if we take care of the equivariant representation.

\begin{theorem}\label{thm:unitary equiv lens}
When $q\in(0,1)$, the weakly real spectral triples given by $(L_q(p,r),\allowbreak\Hi_K,\allowbreak D,\allowbreak J)$ and $(L_q(p,r'),\allowbreak\Hi_K,\allowbreak D,\allowbreak J)$ are unitary equivalent if $r'\equiv -r\bmod p$. The unitary equivalence is implemented by the order order-two automorphism $\sigma(a) = a$, $\sigma(b) = -b^*$ of $\A(SU_q(2))$ and the action $U$ on the Hilbert space given by $U| j\mu n \uparrow\rangle = - |j^+ n \mu \downarrow\rangle$ and $U| j\mu n \downarrow\rangle =  |j^- n \mu \uparrow\rangle$, with $u_{j\mu n}$ a complex number of norm $1$.
\end{theorem}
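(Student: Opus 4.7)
The plan is to verify three ingredients in turn: $\sigma$ is a $*$-automorphism of $\A(SU_q(2))$ compatible with the $\Z/p\Z$-action; $U$ is a well-defined unitary respecting the Dirac operator $D$, the reality $J$, and the $\Hi_K$-decomposition; and the intertwining $U\Pi(x)U^{*}=\Pi(\sigma(x))$ holds on the generators $a,b$.

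First I would check that $\sigma:a\mapsto a$, $b\mapsto -b^{*}$ preserves the relations~\eqref{eqn:commutation relations suq2}. For example $\sigma(b)\sigma(a)=-b^{*}a=-qab^{*}=q\sigma(a)\sigma(b)$ using $b^{*}a=qab^{*}$; the relation $bb^{*}=b^{*}b$ is invariant; and the sphere relation $aa^{*}+bb^{*}=1$ is preserved because $\sigma(bb^{*})=(-b^{*})(-b)=b^{*}b=bb^{*}$. Because $g\tr a = e^{2\pi i/p}a$ while $g\tr b^{*}=e^{-2\pi ir/p}b^{*}$, the automorphism $\sigma$ intertwines the $\Z/p\Z$-action with parameter $r$ and the one with parameter $-r$, and in particular restricts to an algebra isomorphism $L_q(p,r)\cong L_q(p,-r)$.

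Next I would examine $U$. The index ranges in~\eqref{eqn:def of upvector}--\eqref{eqn:def of downvector} are compatible with the swap $\mu\leftrightarrow n$ precisely after shifting $j\to j^{+}$ on $\uparrow$-vectors and $j\to j^{-}$ on $\downarrow$-vectors, so $U$ is a bijection of orthonormal bases and hence a unitary once $|c_{j\mu n}|=1$. Under the swap the grading exponent $(1+r)\mu+(1-r)n$ becomes $(1-(-r))\mu+(1+(-r))n$, so $U$ maps $\Hi_K$ for parameter $r$ onto $\Hi_K$ for parameter $-r$; in particular the subspaces $\Hi_0$ and $\Hi_{p/2}$ appearing in Theorem~\ref{thm:lens spaces} are preserved. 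The identity $UDU^{*}=-D$ is immediate from the eigenvalues: $|j\mu n\uparrow\rangle$ at $2j+\tfrac{3}{2}$ is sent to $|j^{+}n\mu\downarrow\rangle$ at $-(2j^{+}+\tfrac{1}{2})=-(2j+\tfrac{3}{2})$. This sign is the quantum counterpart of the orientation reversal induced by $\sigma$; because the spectrum of $D$ is symmetric it still yields unitary equivalence of almost real spectral triples. The compatibility between $U$ and $J$ is then checked by matching the factors $i^{2(2j+\mu+n)}$ from~\eqref{eqn:reality operator suq2} after the swap, and fixes $c_{j\mu n}$ up to a global sign.

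Finally, and this is the main obstacle, one verifies $U\Pi(a)U^{*}=\Pi(a)$ and $U\Pi(b)U^{*}=-\Pi(b^{*})$. Starting from the explicit matrix elements~\eqref{eqn:written out suq2 algebra action}, the scalar prefactor $q^{(\mu+n-\half)/2}$ is already symmetric under $\mu\leftrightarrow n$, and the remaining $q$-brackets combine, via the identifications $\tilde\alpha^{\pm}_{j\mu n}=(\alpha^{\mp}_{j^{\pm}\mu^{-}n^{-}})^{\dagger}$ and $\tilde\beta^{\pm}_{j\mu n}=(\beta^{\mp}_{j^{\pm}\mu^{-}n^{+}})^{\dagger}$, so that the triangular block $\alpha^{\pm}_{j\mu n}$ is carried into a rescaled copy of the block appearing in $\Pi(a)|j^{\pm}n\mu\rrangle$, while the $\beta$-matrices pick up the expected minus sign and match the matrices of $-\Pi(b^{*})$ after relabeling. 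Matching entries pins down $c_{j\mu n}$ consistently, and multiplicativity of $\sigma$ and of $\mathrm{Ad}(U)$ then extends the intertwining from the generators to all of $\A(SU_q(2))$; restricting to the grading subalgebra gives the claimed unitary equivalence of the two quantum lens space spectral triples.
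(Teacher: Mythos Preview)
Your first two steps---checking that $\sigma$ is a $*$-automorphism intertwining the two $\Z/p\Z$-actions, and that the swap $\mu\leftrightarrow n$ carries $\Hi_K$ for parameter $r$ to $\Hi_K$ for parameter $-r$---match the paper's argument exactly. Your observation that $UDU^{*}=-D$ (rather than $+D$) is correct and worth flagging; the paper's own proof does not address it.

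The genuine gap is in your third step. You assert that one can verify $U\Pi(a)U^{*}=\Pi(a)$ and $U\Pi(b)U^{*}=-\Pi(b^{*})$ with the \emph{same} representation $\Pi$ of Proposition~\ref{prop:algebra action} on both sides. The paper explicitly rules this out: because $U$ interchanges the $\uparrow$ and $\downarrow$ components, conjugation by $U$ flips the triangular structure of the $2\times 2$ blocks $\alpha^{\pm}_{j\mu n}$, $\beta^{\pm}_{j\mu n}$ (lower-triangular becomes upper-triangular and vice versa). The resulting representation $U\Pi(\cdot)U^{*}$ is therefore not $(\lambda',\rho')$-equivariant in the sense of Section~\ref{sec:spectral triple suq2}, so it cannot coincide with $\Pi\circ\sigma$. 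What the paper does instead is observe that the target representation $\Pi'$ must be taken to be equivariant with respect to the \emph{conjugated} $\U_q(\sut)$-action $(\rho'',\lambda'')=(U\rho'U^{-1},U\lambda'U^{-1})$; the unitary equivalence is then between $(L_q(p,r),\Hi_K,\Pi,D,J)$ and $(L_q(p,-r),\Hi_K,\Pi',D,J)$ with this modified equivariance. Your sketch ``the triangular block $\alpha^{\pm}_{j\mu n}$ is carried into a rescaled copy of the block appearing in $\Pi(a)|j^{\pm}n\mu\rrangle$'' glosses over precisely this point: no choice of the phases $c_{j\mu n}$ can turn an upper-triangular matrix into a lower-triangular one, so the matching you describe fails at the level of the off-diagonal entries.
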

\begin{proof}
To show that this map is a unitary equivalence, we first study the equivalence of Hilbert spaces.
If $v\in \Hi_K$ with $K=0$ or $K=p/2$, we have $(1+r)\mu + (1-r)n \equiv 0 \bmod p$ or $p/2$ respectively.
If we take $r'=-r$, we see that $(1+r')\mu + (1-r')n = (1-r)\mu + (1+r)n$ and we see that if we interchange $\mu$ and $n$, $v\in \Hi_K$ is mapped to a vector $v\in\Hi_K'$ in the $K=0$ or $K=p/2$ subspace of the $\zpa$ action for $r'=-r$.

To define a compatible action of the algebra on $\Hi_K'$, we see from Proposition~\ref{prop:algebra action} that we need to interchange $b$ and $-b^*$.
Now to show that this indeed gives a unitary equivalence on the algebra, we need to show that the $U^{-1}(\Pi'(-b))U = \Pi(b^*)$, and $U^{-1}(\Pi'(a))U = \Pi(a)$. This can be done by an explicit calculation, using the full matrices $\alpha^\pm_{j\mu n}$ etc. of \eqref{eqn:suq2 algebra action}, taking care of the fact that while \eqref{eqn:suq2 algebra action} uses the shorthand $|j\mu n\rrangle$ notation, this is not respected by the unitary map $U$.
\end{proof}

Of course, from~\cite{MR2015735} it is implicit that as graph $C^*$-algebras $L_q(p,r)$ and $L_q(p,r')$ for all $r$ and $r'$ coprime to $p$ are isomorphic. However, it is unclear if this descends to an isomorphism on the level of smooth algebras, and if this then leads to a unitary equivalence.

Observe that even though the algebras are isomorphic to each other it is not obvious that the spectral 
triples are unitarily equivalent. This is because the construction of spectral triples over lens spaces
is based on the restriction of an equivariant spectral triple over the full $\A(SU_q(2))$ algebra. As it
is generally not true that a restriction of given spectral triple to two subalgebras results in unitary 
equivalent spectral triples.\footnote{A trivial example is that of a torus with a nontrivial spin 
structure - its restriction to two different subalgebras of functions over a circle gives two spectral 
triples which correspond two two distinct spin structures over the circle.}

It should be noted that using similar methods as in~\cite{OlczykowskiSitarz:2013} one can show that
any spectral triple over quantum lens spaces is a restriction of a spectral triple over $\A(SU_q(2))$ 
algebra. This does not guarantee, however, that the resulting spectral triple lifted to $\A(SU_q(2))$ 
is equivariant.

For the other type of equivalences in the commutative case, i.e.\ the $r\rightarrow r^{-1}$ case, we do not know if they give rise to unitary equivalences in the $q\neq 1$ case. For isomorphisms of $L_q(p,r)$ coming from the automorphisms of $\A(SU_q(2))$, as classified in~\cite[Proposition 3.1]{MR2242563}, we can show that they do not give rise to isomorphisms between $L_q(p,r)$ and $L_q(p,r^{-1})$. Take for example an element of the form $a^* b^l {b^*}^m \in L_q(p,r)$, with $r(l-m)-1\equiv 0 \bmod p$, i.e.\ $l-m\equiv r^{-1} \bmod p$. We have $r^{-1}(l-m) \equiv {(r^{-1})}^2 \nequiv 1 \bmod p$ if $r^{-1} \nequiv r$. This means that if $r^{-1} \nequiv r$, the identity automorphism is not a map from $L_q(p,r)$ to $L_q(p,r^{-1})$. Also, we have $r^{-1} (m-l) \equiv r^{-1}\cdot (-r^{-1}) \nequiv 1 \bmod p$ if $r^{-1} \nequiv -r$, hence the automorphism $a\rightarrow a$, $b\rightarrow b^*$ is not a map from $L_q(p,r)$ to $L_q(p,r^{-1})$. Hence the automorphisms of $\A(SU_q(2))$ do not give homomorphisms from $L_
q(
p,r)$ to $L_q(p,r^{-1})
$ if $r^{-1} \nequiv \pm r$. The same argument holds for $L_q(p,r)$ and $L_q(p,-r^{-1})$.

\section{Quantum teardrops and principal fiber bundles}
In~\cite{MR2989456}, it was shown that certain quantum lens spaces, namely the lens spaces $L_q(p;1,p)$ could be viewed as being principal $U(1)$-comodule algebras over the quantum teardrops, or weighted projective spaces, $\mathbb{WP}_q(1,p)$.
This type of structure it was also studied for $r=1$ in ~\cite{1401.6788}, and when $r$ divides $p$ in~\cite{1409.5335}.
This result was generalized to higher-dimensional quantum weighted projective spaces $\mathbb{P}_q(p,\underline{l})$, where $\underline{l}$ is now a vector, in~\cite[Proposition 7.1]{dandrea_landi_2014}, with the condition that the product of the components of $\underline{l}$ is a power of $p$.

The $L_q(p;1,p)$ and $L_q(p;1,r)$, where $r$ divides $p$, lens spaces do not fit into the framework described above, as we only study lens spaces were the coaction of the finite group action is free, which is not the case for lens spaces of the form $L_q(p,r)$ with $p,r$ not coprime, as can be deduced from the proof of Proposition~\ref{prop:freeness of coaction}.

In this section, we show that the algebras $\mathcal{O}(L_q(p,r))$ are principal $U(1)$-comodule algebras over the quantum teardrop $\mathbb{WP}_q(1,r)$ is true for general quantum lens spaces, if $q^2 \neq 1$.

The teardrop orbifold of Thurston which we denote by $\mathbb{WP}(r_1,r_2)$, can be defined as the quotient of $S^3:=\{ (z_1,z_2)\in \C^2: |z_1|^2 + |z_2|^2=1\}$ by the following (twisted) action of $S^1:= \{t\in \C: |r|^2 = 1\}$:
\[ t\cdot (z_1,z_2) = (t^{r_1} z_1, t^{r_2} z_2).\]
Of course, the teardrop $\mathbb{WP}(n,n)$ for any $n>0$ is homeomorphic to the sphere $S^2$, but the the quotients where $r_1\neq r_2$ are not manifolds anymore, but orbifolds (in fact, what is usually called a ``bad'' orbifold, meaning that it there doesn't exist a finite covering by a simply connected manifold).

The quantum teardrop can be defined similarly, as the subalgebra of $SU_q(2)$ invariant under a suitable action of $U(1)$:
\[ t \cdot (a,b) = (t^{r_1} a,t^{r_2} b),\]
with $a$ and $b$ the generators of the $SU_q(2)$ $C^*$-algebra. The invariant subalgebra is of course the algebra generated as a vector space by basis vectors of the form $e_{klm}$ of \eqref{eqn:suq2 space algebra basis}, such that $r_1 k + r_2(l-m) = 0$. 

It is not hard to see that this algebra is generated by the elements $b b^*$ and $a^{r_2} {(b^*)}^{r_1}$ of $\A(SU_q(2))$. In~\cite{MR2989456} it was also shown that on the $C^*$-algebra level, the quantum teardrops $\mathbb{WP}_q(1,r)$ and $\mathbb{WP}_q(r_1,r)$ are isomorphic.

In order to describe our results on fiber bundles, we will switch to coactions for this section. A continuous coaction $\rho$ for a Hopf algebra $H$ acting on a $C^*$-algebra $A$ is a map $\rho: A \rightarrow A\ts H$ that has the following properties:
\begin{itemize}
 \item $\rho$ is injective
 \item $\rho$ is a comodule structure: $(1\ts \Delta)\circ \rho = (\rho \ts 1)\circ \rho$, where $\Delta$ is the coproduct of $H$.
 \item Podle\'s condition: $\rho(\A)(1\ts H) = \A\ts H$.
\end{itemize}

This coaction can be used to define principal $H$-comodule algebras, which can be seen as a a generalization of the concept of a principal fiber bundle to noncommutative geometry~\cite{MR1461943},\cite{MR2038278},~\cite{MR1098988}.

Let $A$ be a $C^*$-algebra, with a coaction $\rho: A \rightarrow A\ts H$ by a Hopf algebra $H$. Denote by $B$ the coinvariant part of $A$, i.e.\ the part where $\rho(h)a = a\ts 1$.This is a \emph{quantum principal fibration}, or principal $H$-comodule algebra if:
\begin{itemize}
\item The canonical map $\can: A\ts_B A \rightarrow A\ts H: a\ts a' \mapsto a\rho(a')$ is a bijection.
\item The map $B\ts A\rightarrow A: b\ts a\mapsto ba$ splits as a left $B$-module and and a right $H$-comodule map. This is also called equivariant projectivity.
\end{itemize}

Because of the results of~\cite{dabrowksi_gosse_hajac_2001} and~\cite{MR2989456}, a right $H$-comodule algebra $A$ is principal if and only if there exists a strong connection, i.e.\ there exists a map $\omega: H\rightarrow A\ts A$ such that:
\begin{subequations}
\begin{align}
\omega(1) &= 1\ts 1\label{eqn:strong connection a}\\
\mu \circ \omega &= \eta \circ \epsilon\label{eqn:strong connection b}\\
(\omega \ts \text{id}) \circ \Delta &= (\text{id} \ts \rho)\circ \omega\label{eqn:strong connection c}\\
(S\ts \omega) \circ \Delta &= (\sigma \ts \text{id})\circ (\rho\ts \text{id})\circ \omega,\label{eqn:strong connection d}
\end{align}
\end{subequations}
where $S,\eta,\epsilon,\Delta$ are the antipode, unit, counit and comultiplication of the Hopf algebra $H$,  
$\sigma: A\ts H \rightarrow H\ts A$ is the flip and $\mu: a \ts a' \to a a'$ the product.

A strong connection in the case where $H=U(1)$ is particularly nice to work with, because of the following lemma:
\begin{lemma}\label{lem:U(1) strong connection}
An algebra $A$, with continuous coaction $\rho: A \rightarrow A\ts U(1)$, has a strong connection if and only if there exists elements $\sum_i a_i\ts b_i$, and $\sum_i b_i' \ts a_i'$ such that $\sum a_i b_i = \sum b_i' a_i'= 1$, 
for $a_i$ and $a_i'$ of degree $-1$ and $b_i$ and $b_i'$ of degree $+1$ for the coaction.\footnote{While in corrections, we were made aware of~\cite{1409.5335}, where the same lemma was proven using a different method.}
\end{lemma}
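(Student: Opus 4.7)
The plan is to exploit the fact that $U(1)=\C[z,z^{-1}]$ has a vector-space basis $\{z^n\}_{n\in\Z}$ of group-like elements, so a candidate strong connection $\omega$ is determined by its values $\omega(z^n)$, and the four axioms \eqref{eqn:strong connection a}--\eqref{eqn:strong connection d} decouple $n$-by-$n$. For each $n$ these reduce to the requirement that the first and second tensorands of $\omega(z^n)$ be homogeneous of degree $-n$ and $+n$ respectively for the $\Z$-grading of $A$ induced by $\rho$, together with $\mu\omega(z^n)=1$. The whole datum is therefore encoded in $\omega(z)$ and $\omega(z^{-1})$, plus a rule for assembling $\omega(z^n)$ for $|n|\geq 2$.

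For the forward direction, assume a strong connection $\omega$ exists and write $\omega(z) = \sum_i a_i\otimes b_i$ and $\omega(z^{-1}) = \sum_i b_i'\otimes a_i'$. Condition \eqref{eqn:strong connection c} at $z$ reads $\sum_i a_i\otimes \rho(b_i)=\omega(z)\otimes z$, forcing $\rho(b_i)=b_i\otimes z$; condition \eqref{eqn:strong connection d} at $z$ reads $z^{-1}\otimes\omega(z)=(\sigma\otimes\id)\sum_i \rho(a_i)\otimes b_i$, forcing $\rho(a_i)=a_i\otimes z^{-1}$. Thus $a_i$ has degree $-1$ and $b_i$ has degree $+1$; the analogous evaluations at $z^{-1}$ give $\rho(a_i')=a_i'\otimes z^{-1}$ and $\rho(b_i')=b_i'\otimes z$. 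Finally \eqref{eqn:strong connection b} at $z^{\pm 1}$ produces $\sum_i a_ib_i = \epsilon(z) = 1 = \epsilon(z^{-1}) = \sum_i b_i'a_i'$.

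For the converse, set $\omega(1) := 1\otimes 1$ and, for $n\geq 2$, define
\[ \omega(z^n) \;:=\; \sum_{i_1,\ldots,i_n} a_{i_1}a_{i_2}\cdots a_{i_n}\,\otimes\, b_{i_n}b_{i_{n-1}}\cdots b_{i_1}, \]
with the symmetric nested expression in terms of $(b_i',a_i')$ for $n\leq -2$, extended linearly on the basis. Multiplicativity of the $\rho$-induced grading immediately places the two tensorands of $\omega(z^n)$ in degrees $-n$ and $+n$, which verifies \eqref{eqn:strong connection c} and \eqref{eqn:strong connection d}. For \eqref{eqn:strong connection b} I induct on $|n|$: the innermost factor $\sum_{i_n}a_{i_n}b_{i_n} = 1$ collapses under $\mu$, telescoping $\mu\omega(z^n)$ to $\mu\omega(z^{n-1})$, and symmetrically for negative exponents.

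The main point requiring care, and what I expect to be the chief bookkeeping step, is to fix the nested ordering of the products on the two tensor factors so that this telescoping propagates correctly; the ordering displayed above (left tensorand in natural order, right tensorand reversed) is the one that does the job, and the mirror choice handles the primed case. Beyond that, no compatibility with the algebra relation $zz^{-1}=1$ in $U(1)$ needs to be checked, because $\omega$ is defined only as a \emph{linear} map on the group-like basis $\{z^n\}$, on which the strong-connection conditions decouple; this completes the argument.
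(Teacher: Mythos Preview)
Your argument is correct and, for the converse direction, is exactly the paper's approach: the recursive definition $\omega(u^n)=\sum_i a_i\,\omega(u^{n-1})\,b_i$ used there unravels to precisely your nested formula $\sum a_{i_1}\cdots a_{i_n}\otimes b_{i_n}\cdots b_{i_1}$, and the inductive verification of \eqref{eqn:strong connection b}--\eqref{eqn:strong connection d} is the same telescoping you describe.

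Two small remarks. First, your forward direction is actually more explicit than the paper's: you read off the required elements directly from $\omega(z^{\pm 1})$ and the axioms, whereas the paper argues the contrapositive via surjectivity of the canonical map $\can$ (if no such decomposition of $1$ exists, then $1\otimes u$ or $1\otimes u^{-1}$ is missed). Both are fine. Second, the phrase ``forcing $\rho(b_i)=b_i\otimes z$'' is a slight overstatement: from $\sum_i a_i\otimes\rho(b_i)=\sum_i a_i\otimes b_i\otimes z$ you cannot conclude this termwise unless the $a_i$ are linearly independent. The fix is standard---decompose each $b_i$ into $\Z$-homogeneous components and observe that all components of degree $\neq 1$ cancel in the sum, so one may replace each $b_i$ by its degree-$1$ part without changing $\omega(z)$; similarly for the $a_i$. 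With that adjustment your forward direction is complete.
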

The proof of this lemma is a slight generalization of a construction done in the proof of~\cite[Theorem 3.3]{MR2989456}. 
It also follows from this that a principal $U(1)$-fiber bundle over $A_0$ is \emph{strongly $\Z$-graded}, i.e.\ there is a $\Z$-grading $A = \oplus_{k\in\Z} A_k$ such that $A_k A_l = A_{k+l}$. This immediately follows from the lemma above.
\begin{proof}
 We construct a strong connection by induction. Define $\omega$ by:
\begin{align*}
\omega(1) &= 1\ts 1\\
\omega(u^n) &= \sum_i a_i \omega(u^{n-1}) b_i\\
\omega(u^{-n}) &= \sum_i b_i' \omega(u^{-n+1}) a_i',
\end{align*}
for each $n\geq 1$. Condition~\eqref{eqn:strong connection a} is immediate. For $n=1$, we see that because $\sum a_i b_i = \sum b_i' a_i'= 1 = \eta(\epsilon(u))$, condition~\eqref{eqn:strong connection b} is satisfied. Conditions~\eqref{eqn:strong connection c} and~\eqref{eqn:strong connection d} are also obvious by the definition.

Now suppose $\omega(u^{n-1})$ satisfies conditions~\eqref{eqn:strong connection b}--\eqref{eqn:strong connection d}. Then 
\[\mu(\omega(u^n)) = \mu( \sum_i a_i \omega(u^{n-1}) b_i) = 1,\] because $\mu(\omega(u^{n-1}))=1$. We also see that 
\begin{align*}(\text{id}\ts \rho) (\omega(u^n))&= (\text{id}\ts \rho) \sum_i a_i \omega(u^{n-1}) b_i\\
&= \sum_i a_i \omega(u^{n-1}) b_i \ts u^n\\
&= \omega(u^n)\ts u^n.
\end{align*}
The same argument also works for condition~\eqref{eqn:strong connection d}, and the $u^{-n}$ case.

If the conditions of the lemma are not satisfied, it cannot be a fibration, because the map $\can$ is either missing $1\ts u$ or $1\ts u^{-1}$ from its image.
\end{proof}
It is immediate from the Lemma that the $a_i$ generate $A_1$ as a right module over $A_0$, and $a_i'$ generate $A_1$ as a left module over $A_0$, by observing that $\forall e\in A_1$ we have $e = \sum_i a_i b_i e$ and $b_i e \in A_0$.

\begin{theorem}\label{thm:lens spaces are fibrations, coordinate version}
The coordinate algebra $\mathcal{O}(L_q(p,r))$, with $r\neq 0$, is a quantum principal $U(1)$-fibration over the quantum teardrop $\mathcal{O}(\mathbb{WP}_q(1,r))$ when $0\leq q< 1$.\end{theorem}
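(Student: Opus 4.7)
My plan is to establish the two implications separately, with Lemma~\ref{lem:U(1) strong connection} as the main tool throughout. The relevant $U(1)$-coaction is the one descended from the weight-$(1,r)$ coaction $\rho(a)=a\otimes u$, $\rho(b)=b\otimes u^{r}$ on $\mathcal{O}(SU_q(2))$; for this coaction the coinvariants inside $\mathcal{O}(L_q(p,r))$ are exactly $\mathcal{O}(\mathbb{WP}_q(1,r))$, and the degree-$n$ summand $A_{n}$ is spanned by those monomials $a^{k}b^{l}(b^{*})^{m}$ satisfying $k+r(l-m)=np$.

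For the ``if'' direction ($r=1$) I would exploit the identity $aa^{*}+bb^{*}=1$, together with the fact that $aa^{*}$ and $bb^{*}$ commute, to expand $1=(aa^{*}+bb^{*})^{p}=\sum_{k=0}^{p}\binom{p}{k}(aa^{*})^{p-k}(bb^{*})^{k}$. Using $a^{*}b=qba^{*}$ and $ab=q^{-1}ba$ one rewrites each summand as a scalar multiple of $a^{p-k}b^{k}(b^{*})^{k}(a^{*})^{p-k}$ up to lower-order corrections coming from $[a,a^{*}]=(q^{2}-1)bb^{*}$; those corrections themselves factor as a degree $+1$ element times a degree $-1$ element, so the rewrite stays inside $A_{+1}\cdot A_{-1}$. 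For $r=1$ the element $a^{p-k}b^{k}$ has $U(1)$-weight $(p-k)+k=p$ and so lies in $A_{+1}$, while $(b^{*})^{k}(a^{*})^{p-k}$ lies in $A_{-1}$; this exhibits $1\in A_{+1}A_{-1}$. Taking adjoints of the whole identity yields $1\in A_{-1}A_{+1}$, and Lemma~\ref{lem:U(1) strong connection} then supplies a strong connection and hence principality.

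For the ``only if'' direction ($r\neq 1$) I would seek an algebraic obstruction matching the classical geometric fact that the induced $U(1)$-action on $L(p,r)$ has stabilizer $\mu_{r}$ at the preimage of the orbifold point $[0\!:\!z_{2}]\in\mathbb{WP}(1,r)$ and hence fails to be free. Concretely one wants to show that the multiplication map $A_{+1}\otimes_{A_{0}}A_{-1}\to A_{0}$ is not surjective, by exhibiting a specific element of $A_{0}$ outside $A_{+1}A_{-1}$. The main obstacle, and where I expect the real work to lie, is that for $r\neq 1$ the subspaces $A_{\pm 1}$ still contain a rich supply of monomials (for instance $a^{p}$, $a^{p-r}b$ when $p>r$, and $(a^{*})^{r-1}b$), so naive linear combinations do not obviously fail to produce $1$; a finer invariant --- for example the class of $A_{+1}$ in $K_{0}(A_{0})$, a Chern--Connes-type pairing, or a continuity argument relating a hypothetical quantum strong connection to its $q\to 1$ limit where the classical bundle is known not to be principal --- is likely needed to rule out principality decisively.
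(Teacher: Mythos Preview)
Your ``if'' direction ($r=1$) is essentially the paper's argument: expand a $p$-th power of a defining relation and observe that each summand factors as a degree-$(-1)$ element times a degree-$(+1)$ element, then invoke Lemma~\ref{lem:U(1) strong connection}. The paper uses $(\alpha^{*}\alpha+q^{2}\beta^{*}\beta)^{p}=1$ and $(\alpha\alpha^{*}+\beta^{*}\beta)^{p}=1$, commuting all $\alpha$'s to one side to write $1$ as $\sum c_{p_{1}}(\alpha^{*})^{p-p_{1}}(\beta^{*})^{p_{1}}\cdot\alpha^{p-p_{1}}\beta^{p_{1}}$; for $r=1$ the right factor has weight $(p-p_{1})+p_{1}=p$, hence degree $+1$, exactly as you say.

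For the ``only if'' direction your proposal has a genuine gap: you correctly locate the target (show $1\notin A_{+1}A_{-1}$, equivalently $1\otimes u\notin\operatorname{im}\can$) but then reach for $K_{0}$-classes, Chern--Connes pairings, or a $q\to 1$ continuity argument. The paper needs none of this; it gives an elementary monomial obstruction. Any element of $\operatorname{im}\can$ hitting the $u$-component is a linear combination of products $\alpha^{k}\beta^{l}(\beta^{*})^{m}\cdot\alpha^{k'}\beta^{l'}(\beta^{*})^{m'}$ with the right factor of degree exactly $+1$, i.e.\ $k'+r(l'-m')=p$. When $r\neq 1$ is coprime to $p$, setting $k'=0$ forces $r(l'-m')=p$, which has no integer solution; hence every admissible right factor has $k'\neq 0$. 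On the other hand, for $0<q\leq 1$ any finite sum of such products equalling $1$ in $\mathcal{O}(SU_{q}(2))$ must contain a term with $k=k'=0$, since the relations $\alpha\alpha^{*}+\beta\beta^{*}=1$ and $\alpha^{*}\alpha+q^{2}\beta^{*}\beta=1$ always leave a nonvanishing pure-$\beta\beta^{*}$ contribution. These two constraints are incompatible, so $1\otimes u$ is not in the image of $\can$ and principality fails. The missing idea in your outline is precisely this: the obstruction is combinatorial (no $k'=0$ monomial has degree $+1$ when $r\neq 1$), not $K$-theoretic.
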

Of course, for $q=1$, the circle action is not free if $r\neq 1$. 

\begin{proof}
For basis vector $e_{klm} \in L_q(p,r)$, we have $k + r(l-m) = np$ for some $n\in \Z$. Define the coaction $\rho$ of $U(1)$ to be $\rho(e_{klm}) = e_{klm} \ts u^n$, with $n$ the number above. Clearly $\mathbb{WP}_q(1,r)$ is the coinvariant subalgebra under this coaction.

We first consider the case $q\neq 0$. Observe that $a^p$ is an element of degree $+1$, and ${(a^*)}^p$ is of degree $-1$. Now set $m$ to be an integer such that $p - r m<0$, then $a^{-p+rm} {(b^*)}^m$ is also of degree $-1$.

We see that 
\[ a^p {(a^*)}^p = \prod_{j=0}^{p-1} (1- q^{-2j} bb^*), \]
and
\[b^m {(a^*)}^{-p+rm} a^{-p+rm}{(b^*)}^m = q^{m (-p+rm)}  \prod_{j=1}^{rm-p} (1- q^{2j} bb^*) {(bb^*)}^m.\] 
Setting $bb^* = x$, we see that the first expression is a polynomial in $x$ with roots $x=q^{2j}$ for $j=0,1,\ldots, p-1$ and 
the second is a polynomial in $x$ with roots $x=q^{-2j}$ with $j=1,2,\ldots, (rm-p)$ and $x=0$. 
By B\'ezout's identity, since the two polynomials have no common divisor we know there exist polynomials 
$g_1$, $g_2$ such that
\[ g_1(bb^*) a^p {(a^*)}^p + g_2(bb^*) b^m {(a^*)}^{-p+rm}  a^{-p+rm}{(b^*)}^m = 1.\]
Because polynomials in $bb^*$ lie in degree $0$ of the coaction, this defines a strong connection by Lemma~\ref{lem:U(1) strong connection}, with the left degree $+1$ and the right degree $-1$. Of course, we can do the 
same for the left-degree $-1$, and right degree $+1$.

If $q=0$, the left degree $-1$ part is easy, because we just need to observe that ${(a^*)}^p a^p =1$, however the left degree $+1$ part is somewhat harder, because the previous construction will involve powers $q^{-2k}$. However, it can be shown that
\begin{equation} \sum_{p_1 = 0}^p a^{p-p_1} b^{p_1} {(a^*)}^{(r-1)p_1} a^{(r-1)p_1} {(b^*)}^{p_1} {(a^*)}^{p-p_1} = 1,\label{eqn:decomposition of 0 when q 0}\end{equation}
which gives a correctly graded decomposition, with $a_i =  a^{p-p_1} b^{p_1} {(a^*)}^{(r-1)p_1}$, since $(p-p_1) + r p_1 - (r-1)p_1 = p$. 

To show that the sum in \eqref{eqn:decomposition of 0 when q 0} equals $1$, first observe that $a^* a =1$, hence we can reduce the sum to 
\[ \sum_{p_1 = 0}^p a^{p-p_1} b^{p_1} {(b^*)}^{p_1} {(a^*)}^{p-p_1}.\]
Then, observing that ${(bb^*)}^2 = b b^*$, this can be further reduced to
\[ a^p {(a^*)}^p + \sum_{p_1 = 1}^{p-1} a^{p-p_1} b b^* {(a^*)}^{p-p_1} + b b^*.\]
We can collapse it to something even simpler still, by observing that for all $k>0$, we have $a^k {(a^*)}^k = a^{k-1} (1 - b b^*) {(a^*)}^{k-1}$. Hence
\[  a^p {(a^*)}^p + \sum_{p_1 = 1}^{p-1} a^{p-p_1} b b^* {(a^*)}^{p-p_1} = a a^*,\]
and the sum reduces to $a a^* + b b^* =1$.

Thus we have proven that the conditions of Lemma~\ref{lem:U(1) strong connection} are satisfied, hence $\mathcal{O}(L_q(p,r))$ is a quantum principal fibration over $\mathcal{O}(\mathbb{WP}_q(1,r))$.
\end{proof}

In the future we hope to use the above characterization of quantum lens spaces as a total space for a principal $U(1)$-bundle over quantum teardrops to define real spectral triples on the base space. The classical teardrops are not manifolds and therefore the Dirac operator and the spectral triple for them make sense only when considered over the covering space. It remains open whether the orbifold-type singularities disappear when one considers $q$-deformed objects as some studies suggest (see~\cite{Brzezinski:12,MR2989456}). Indeed, a similar approach has been implemented in~\cite{Harju}, where an example of an odd spectral triple over quantum weighted projective spaces was constructed. However, when it comes to spectral triples we cannot, in contrast to~\cite{dabrowski_sitarz_2013, dabrowski_sitarz_zucca_2013},
claim that the spectral triple over the quantum lens space is projectable.

\begin{bibdiv}
\begin{biblist}

\bib{1401.6788}{article}{
    author = {Arici, Francesca},
    author = {Brain, Simon},
    author = {Landi, Giovanni},
    title = {The {G}ysin Sequence for Quantum Lens Spaces},
    date = {2014},
    month = {Jan},
    archivePrefix = {arXiv},
}

\bib{1409.5335}{article}{
    author = {Arici, Francesca},
    author = {Kaad, Jens},
    author = {Landi, Giovanni},
    title = {Pimsner algebras and {G}ysin sequences from principal circle actions},
    date = {2014},
    month = {sep},
    archivePrefix = {arXiv},
}

\bib{bar_dirac_1992}{article}{
      author={B\"ar, Christian},
       title={The {D}irac operator on homogeneous spaces and its spectrum on
  3-dimensional lens spaces},
        date={1992},
        ISSN={{0003-889X}},
     journal={Archiv der Mathematik},
      volume={59},
      number={1},
       pages={65\ndash 79},
         url={http://www.springerlink.com/content/l8565611612716m7/},
}

\bib{Brzezinski:12}{article}{
      author={Brzezi{\'n}ski, Tomasz},
       title={On the smoothness of the noncommutative pillow and quantum
  teardrops},
        date={2013},
      eprint={1311.4758},
}

\bib{MR2989456}{article}{
      author={Brzezi{\'n}ski, Tomasz},
      author={Fairfax, Simon~A.},
       title={Quantum teardrops},
        date={2012},
        ISSN={0010-3616},
     journal={Comm. Math. Phys.},
      volume={316},
      number={1},
       pages={151\ndash 170},
         url={http://dx.doi.org/10.1007/s00220-012-1580-2},
      review={\MR{2989456}},
}

\bib{MR2038278}{article}{
      author={Brzezi{\'n}ski, Tomasz},
      author={Hajac, Piotr~M.},
       title={The {C}hern-{G}alois character},
        date={2004},
        ISSN={1631-073X},
     journal={C. R. Math. Acad. Sci. Paris},
      volume={338},
      number={2},
       pages={113\ndash 116},
         url={http://dx.doi.org/10.1016/j.crma.2003.11.009},
      review={\MR{2038278}},
}

\bib{connes_noncommutative_1995}{article}{
      author={Connes, Alain},
       title={Noncommutative geometry and reality},
        date={1995},
        ISSN={0022-2488},
     journal={Journal of Mathematical Physics},
      volume={36},
      number={11},
       pages={6194\ndash 6231},
         url={http://dx.doi.org/10.1063/1.531241},
      review={\MR{MR1355905 (96g:58014)}},
}

\bib{connes_gravity_1996}{article}{
      author={Connes, Alain},
       title={Gravity coupled with matter and the foundation of non-commutative
  geometry},
        date={1996},
        ISSN={0010-3616},
     journal={Communications in Mathematical Physics},
      volume={182},
      number={1},
       pages={155\ndash 176},
         url={http://projecteuclid.org/getRecord?id=euclid.cmp/1104288023},
      review={\MR{MR1441908 (98f:58024)}},
}

\bib{dabrowksi_gosse_hajac_2001}{article}{
      author={D{\polhk{a}}browski, Ludwik},
      author={Grosse, Harald},
      author={Hajac, Piotr~M.},
       title={Strong connections and {C}hern-{C}onnes pairing in the
  {H}opf-{G}alois theory},
        date={2001},
        ISSN={0010-3616},
     journal={Comm. Math. Phys.},
      volume={220},
      number={2},
       pages={301\ndash 331},
         url={http://dx.doi.org/10.1007/s002200100433},
      review={\MR{1844628 (2002g:58007)}},
}

\bib{dbrowski_dirac_2005}{article}{
      author={D{\c{a}}browski, Ludwik},
      author={Landi, Giovanni},
      author={Sitarz, Andrzej},
      author={van Suijlekom, Walter},
      author={V{\'a}rilly, Joseph~C.},
       title={The {D}irac operator on {${\mathrm{SU}}_q(2)$}},
        date={2005},
        ISSN={0010-3616},
     journal={Communications in Mathematical Physics},
      volume={259},
      number={3},
       pages={729\ndash 759},
         url={http://dx.doi.org/10.1007/s00220-005-1383-9},
      review={\MR{2174423 (2006h:58034)}},
}

\bib{dabrowski_sitarz_2013}{article}{
    AUTHOR = {D{\polhk{a}}browski, Ludwik},
    author= {Sitarz, Andrzej},
     TITLE = {Noncommutative circle bundles and new {D}irac operators},
     date = {2013},
     ISSN = {0010-3616},
   JOURNAL = {Comm. Math. Phys.},
    VOLUME = {318},
    NUMBER = {1},
     PAGES = {111--130},
     URL = {http://dx.doi.org/10.1007/s00220-012-1550-8},
  review = {\MR{3017065}},
}

\bib{dabrowski_sitarz_zucca_2013}{article}{
      author={D{\polhk{a}}browski, Ludwik},
      author={Sitarz, Andrzej},
      author={Zucca, Alessandro},
       title={Dirac operator on noncommutative principal circle bundles},
        date={2013},
      eprint={1305.6185},
}

\bib{dandrea_landi_2014}{article}{
      author={D'Andrea, Francesco},
      author={Landi, Giovanni},
       title={Quantum weighted projective and lens spaces},
        date={2014},
      eprint={1410.4508},
}

\bib{british1885report}{inproceedings}{
      author={Dyck, Walther},
       title={On the ``{A}nalysis situs'' of three-dimensional spaces},
        date={1885},
   booktitle={Report of the {F}ifty-fourth {M}eeting of the {B}ritish
  {A}ssociation for the {A}dvancement of {S}cience: Held at {M}ontreal in
  {A}ugust and {S}eptember 1884},
   publisher={J. Murray},
         url={http://books.google.nl/books?id=ZGsjQwAACAAJ},
}

\bib{Ellwood:2000}{article}{
 author ={Ellwood, David A.},
 title={A new characterisation of principal actions},
	date = {2000},
	ISSN = {0022-1236},
   JOURNAL = {J. Funct. Anal.},
    VOLUME = {173},
      YEAR = {2000},
    NUMBER = {1},
     PAGES = {49\ndash 60},
       URL = {http://dx.doi.org/10.1006/jfan.2000.3561},
  review = {\MR{1760277 (2001c:46126)}},
}

\bib{franc_spin_1987}{article}{
      author={Franc, Annick},
       title={Spin structures and {K}illing spinors on lens spaces},
        date={1987},
        ISSN={0393-0440},
     journal={J. Geom. Phys.},
      volume={4},
      number={3},
       pages={277\ndash 287},
         url={http://dx.doi.org/10.1016/0393-0440(87)90015-5},
      review={\MR{957015 (90e:57047)}},
}

\bib{bonda_elements_2001}{book}{
      author={Gracia-Bond{\'{\i}}a, Jos{\'e}~M.},
      author={V{\'a}rilly, Joseph~C.},
      author={Figueroa, H{\'e}ctor},
       title={Elements of noncommutative geometry},
      series={Birkh\"auser Advanced Texts: Basel Textbooks},
   publisher={Birkh\"auser Boston Inc.},
     address={Boston, MA},
        date={2001},
        ISBN={0-8176-4124-6},
      review={\MR{MR1789831 (2001h:58038)}},
}

\bib{MR2242563}{article}{
      author={Hadfield, Tom},
      author={Kr{\"a}hmer, Ulrich},
       title={Twisted homology of quantum {${\mathrm{SL}}(2)$}},
        date={2005},
        ISSN={0920-3036},
     journal={$K$-Theory},
      volume={34},
      number={4},
       pages={327\ndash 360},
         url={http://dx.doi.org/10.1007/s10977-005-3118-2},
      review={\MR{2242563 (2007j:58009)}},
}

\bib{MR1461943}{article}{
      author={Hajac, Piotr~M.},
       title={Strong connections on quantum principal bundles},
        date={1996},
        ISSN={0010-3616},
     journal={Comm. Math. Phys.},
      volume={182},
      number={3},
       pages={579\ndash 617},
         url={http://projecteuclid.org/getRecord?id=euclid.cmp/1104288302},
      review={\MR{1461943 (98e:58022)}},
}

\bib{Harju}{article}{
   author = {{Harju}, A.~J.},
    title = {Dirac Operators on Quantum Weighted Projective Spaces},
  journal = {ArXiv e-prints},
archivePrefix = {arXiv},
   eprint = {1402.6251},
 primaryClass = {math.QA},
 keywords = {Mathematics - Quantum Algebra, Mathematical Physics},
     year = {2014},
    month = {feb},
   adsurl = {http://adsabs.harvard.edu/abs/2014arXiv1402.6251H},
  adsnote = {Provided by the SAO/NASA Astrophysics Data System}
}

\bib{MR2015735}{article}{
      author={Hong, Jeong~Hee},
      author={Szyma{\'n}ski, Wojciech},
       title={Quantum lens spaces and graph algebras},
        date={2003},
        ISSN={0030-8730},
     journal={Pacific Journal of Mathematics},
      volume={211},
      number={2},
       pages={249\ndash 263},
         url={http://dx.doi.org/10.2140/pjm.2003.211.249},
      review={\MR{2015735 (2004g:46074)}},
}

\bib{ikeda_spectra_1979}{article}{
      author={Ikeda, Akira},
      author={Yamamoto, Yoshihiko},
       title={On the spectra of {$3$}-dimensional lens spaces},
        date={1979},
        ISSN={0030-6126},
     journal={Osaka J. Math.},
      volume={16},
      number={2},
       pages={447\ndash 469},
         url={http://projecteuclid.org/getRecord?id=euclid.ojm/1200772088},
      review={\MR{539600 (80e:58042)}},
}

\bib{iochum_classification_2004}{article}{
      author={Iochum, Bruno},
      author={Sch{\"u}cker, Thomas},
      author={Stephan, Christoph},
       title={On a classification of irreducible almost commutative
  geometries},
        date={2004},
        ISSN={0022-2488},
     journal={J. Math. Phys.},
      volume={45},
      number={12},
       pages={5003\ndash 5041},
         url={http://dx.doi.org/10.1063/1.1811372},
      review={\MR{2105233 (2005j:58038)}},
}

\bib{OlczykowskiSitarz:2013}{article}{
      author={Olczykowski, Piotr},
      author={Sitarz, Andrzej},
       title={Real spectral triples over noncommutative {B}ieberbach
  manifolds},
        date={2013},
        ISSN={0393-0440},
     journal={J. Geom. Phys.},
      volume={73},
       pages={91\ndash 103},
         url={http://dx.doi.org/10.1016/j.geomphys.2013.05.003},
      review={\MR{3090104}},
}

\bib{reidemeister_homotopieringe_1935}{article}{
      author={Reidemeister, Kurt},
       title={Homotopieringe und {L}insenr\"aume},
        date={1935},
        ISSN={0025-5858},
     journal={Abh. Math. Sem. Univ. Hamburg},
      volume={11},
      number={1},
       pages={102\ndash 109},
         url={http://dx.doi.org/10.1007/BF02940717},
      review={\MR{3069647}},
}

\bib{MR1098988}{article}{
      author={Schneider, Hans-J{\"u}rgen},
       title={Principal homogeneous spaces for arbitrary {H}opf algebras},
        date={1990},
        ISSN={0021-2172},
     journal={Israel J. Math.},
      volume={72},
      number={1-2},
       pages={167\ndash 195},
         url={http://dx.doi.org/10.1007/BF02764619},
        note={Hopf algebras},
      review={\MR{1098988 (92a:16047)}},
}

\bib{MR3108692}{article}{,
    AUTHOR = {Teh, Kevin},
     TITLE = {Nonperturbative spectral action of round coset spaces of {SU}(2)},
   JOURNAL = {J. Noncommut. Geom.},
    VOLUME = {7},
      YEAR = {2013},
    NUMBER = {3},
     PAGES = {677\ndash 708},
      ISSN = {1661-6952},
  review = {\MR{3108692}},
       URL = {http://dx.doi.org/10.4171/JNCG/131},
}

\bib{MR890482}{article}{
      author={Woronowicz, Stanis{\l}aw~L.},
       title={Twisted {${\mathrm SU}(2)$} group. {A}n example of a
  noncommutative differential calculus},
        date={1987},
        ISSN={0034-5318},
     journal={Kyoto University. Research Institute for Mathematical Sciences.
  Publications},
      volume={23},
      number={1},
       pages={117\ndash 181},
         url={http://dx.doi.org/10.2977/prims/1195176848},
      review={\MR{890482 (88h:46130)}},
}

\bib{MR558314}{article}{
      author={Yamamoto, Yoshihiko},
       title={On the number of lattice points in the square {$x+y\leq u$} with
  a certain congruence condition},
        date={1980},
        ISSN={0030-6126},
     journal={Osaka J. Math.},
      volume={17},
      number={1},
       pages={9\ndash 21},
         url={http://projecteuclid.org/getRecord?id=euclid.ojm/1200772803},
      review={\MR{558314 (81c:10062)}},
}

\end{biblist}
\end{bibdiv}

\bibliographystyle{plain}
\end{document}